\newtheorem{theorem}{Theorem}[section]
\newtheorem{lemma}[theorem]{Lemma}
\newtheorem{proposition}[theorem]{Proposition}
\newtheorem{remark}[theorem]{Remark}
\newtheorem{definition}[theorem]{Definition}
\numberwithin{equation}{section}
\begin{document}

\baselineskip=15.5pt

\title[Integer Sequences and Monomial Ideals]{Integer Sequences and Monomial Ideals}

\author[C. Kumar]{Chanchal Kumar}

\address{IISER Mohali,
Knowledge City, Sector 81, SAS Nagar, Punjab -140 306, India.}

\email{chanchal@iisermohali.ac.in}

\author[A. Roy]{Amit Roy}

\address{IISER Mohali,
Knowledge City, Sector 81, SAS Nagar, Punjab -140 306, India.}

\email{amitroy@iisermohali.ac.in}

\subjclass[2010]{13D02, 05E40}

\date{}

\maketitle

\begin{abstract}
Let $\mathfrak{S}_n$ be the set of all permutations of 
$[n]=\{1,\ldots,n\}$ and 
let $W$ be the subset consisting of permutations
$\sigma \in \mathfrak{S}_n$ avoiding 132 and 312-patterns. The monomial ideal $I_W =
\left\langle \mathbf{x}^{\sigma} = 
\prod_{i=1}^n x_i^{\sigma(i)} : 
\sigma \in W \right\rangle $ in the 
polynomial ring $R = k[x_1,\ldots,x_n]$ over a 
field $k$ is called a {\em hypercubic ideal} in \cite{AC1}. The Alexander 
dual $I_W^{[\mathbf{n}]}$ of $I_W$ with respect to 
$\mathbf{n}=(n,\ldots,n)$ has the minimal cellular resolution 
supported on the first barycentric 
subdivision $\mathbf{Bd}(\Delta_{n-1})$ of an 
$n-1$-simplex $\Delta_{n-1}$. We show that 
the number of standard monomials of the 
Artinian quotient $\frac{R}{I_W^{[\mathbf{n}]}}$ equals
the number of rooted-labelled  unimodal forests on the
vertex set $[n]$. In other words,
 \[ \dim_k\left(\frac{R}{I_W^{[\mathbf{n}]}}\right) = 
 \sum_{r=1}^n r!~s(n,r) =  {\rm Per}\left([m_{ij}]_{n \times n} \right),\]
where $s(n,r)$ is the (signless) Stirling number of
 the first kind and ${\rm Per}([m_{ij}]_{n \times n})$ 
 is the permanent of the matrix $[m_{ij}]$ with $m_{ii}=i$ and $m_{ij}=1$ for $i \ne j$.
For various subsets $S$ of $\mathfrak{S}_n$ consisting of 
permutations avoiding patterns, the corresponding integer
sequences $\left\lbrace \dim_k\left(\frac{R}{I_S^{[\mathbf{n}]}}\right) 
\right\rbrace_{n=1}^{\infty}$ are identified.\\

\noindent
{\sc Key words}: Permutations avoiding patterns, standard monomials, parking functions.
\end{abstract}

\section{Introduction}
Let $G$ be an oriented graph on the vertex set 
$\{0,1,\ldots,n\}$ rooted at ${0}$. A nonoriented graph
on $\{0,1,\ldots,n\}$ has the symmetric adjacency matrix and 
it is identified with a unique rooted oriented graph on 
$\{0,1,\ldots,n\}$ having the same (symmetric) adjacency matrix.
Let $R=k[x_1,\ldots,x_n]$ be the standard polynomial ring 
in $n$ variables over a field $k$.
Postnikov and Shapiro \cite{PoSh} associated a monomial ideal 
${\mathcal{M}}_G$ in $R$ such that 
the number of standard monomials of
the Artinian quotient
$\frac{R}{{\mathcal{M}}_G}$ is precisely the number of 
oriented-spanning trees of $G$. A sequence 
$\mathbf{p}=(p_1,\ldots,p_n) \in \mathbb{N}^n$ is called a 
{\em $G$-parking function} if $\mathbf{x}^{\mathbf{p}}=
\prod_{i=1}^{n} x_i^{p_i}$ is a standard monomial of 
$\frac{R}{\mathcal{M}_G}$ (i.e., $\mathbf{x}^{\mathbf{p}} \notin 
\mathcal{M}_G$). Let ${\rm SPT}(G)$ be the set of (oriented) 
spanning trees of $G$ rooted at $0$ and ${\rm PF}(G)$ be the set
of $G$-parking functions of $G$. Then $|{\rm PF}(G)|=
|{\rm SPT}(G)|$ (see \cite{PoSh}).

If $G$ is the complete graph $K_{n+1}$ on 
the vertex set 
$\{0,1,\ldots,n\}$, then 
\[ \mathcal{M}_{K_{n+1}}= \left\langle \left( 
\prod_{i \in I} x_i \right)^{n-|I|+1} : \emptyset \ne I 
\subseteq [n] \right\rangle
\]
is called a {\em tree ideal}. Cayley's formula for enumeration of
labelled trees states that $|{\rm SPT}(K_{n+1})|=(n+1)^{n-1}$.
Also the set ${\rm PF}(K_{n+1})$ of
 $K_{n+1}$-parking functions is the set ${\rm PF}_n$ of (ordinary) parking functions
of length $n$. A finite sequence $\mathbf{p}=(p_1,\ldots,p_n)
\in \mathbb{N}^n$
with $0 \le p_i < n$ is called a {\em parking function} of
length $n$ if a nondecreasing rearrangement $p_{i_1} \le p_{i_2}
\le \ldots \le p_{i_n}$ of $\mathbf{p}$ satisfies $p_{i_j} < j$ 
for $1 \le j \le n$. A recursively defined bijection $\phi :
{\rm PF}_n \longrightarrow {\rm SPT}(K_{n+1})$ has been 
constructed by Kreweras \cite{K}. Parking functions 
or more generally, vector parking functions have appeared in many
areas of mathematics. For more on parking functions, we
refer to \cite{PiSt,Ya}. An algorithmic bijection
$\phi : {\rm PF}(G) \longrightarrow {\rm SPT}(G)$, called 
{\em DFS-burning algorithm}, is given by 
Perkinsons et. al. \cite{PYY}
for a simple graph $G$ and by Gaydarov and Hopkins \cite{GH}  for
 multigraph $G$.

Let $\mathfrak{S}_n$ be the set of all permutations of $[n]=
\{1,2,\ldots,n\}$. For 
$r \le n$, consider a $\tau \in \mathfrak{S}_r$, called a 
{\em pattern}. 
A permutation $\sigma \in \mathfrak{S}_n$ is said to {\em 
avoid a pattern} $\tau$ if there is no subsequence in
$\sigma=\sigma(1)\sigma(2) \ldots \sigma(n)$ that is in the same 
relative order as $\tau$. Let $\mathfrak{S}_n(\tau)$ be the
subset consisting of permutations $\sigma \in \mathfrak{S}_n$
that avoid pattern $\tau$. If $r > n$, then 
$\mathfrak{S}_n(\tau) = \mathfrak{S}_n$.  Also,
if $\tau^{(i)} \in \mathfrak{S}_{r_i}$ for $1\le i \le s$, then 
$\mathfrak{S}_n(\tau^{(1)},\ldots,\tau^{(s)}) = \bigcap_{j=1}^s 
\mathfrak{S}_n(\tau^{(j)})$.  
Enumeration and combinatorial properties of the set of permutations avoiding
patterns are obtained in \cite{SiSc}.

For a nonempty subset $S \subseteq \mathfrak{S}_n$, 
consider the monomial ideal 
$I_S = \langle {\mathbf{x}}^{\sigma} = \prod_{i=1}^n x_i^{\sigma(i)}
: \sigma \in S \rangle $  in $R=k[x_1,\ldots,x_n]$ induced by 
$S$. The monomial ideal $I_{\mathfrak{S}_n}$ is called a
{\em permotuhedron ideal} and the Alexander dual
$I_{\mathfrak{S}_n}^{[\mathbf{n}]}$ is the tree ideal 
$\mathcal{M}_{K_{n+1}}$. 
The $i^{th}$ Betti number 
$\beta_i(I_{\mathfrak{S}_n}^{[\mathbf{n}]})$ of
$I_{\mathfrak{S}_n}^{[\mathbf{n}]}$ is given by
\[
\beta_i(I_{\mathfrak{S}_n}^{[\mathbf{n}]})=
\beta_{i+1}\left(\frac{R}{I_{\mathfrak{S}_n}^{[\mathbf{n}]}}\right)
= (i!) S(n+1,i+1); \quad (0 \le i \le n-1),
\]
where $S(n,r)$ is the Stirling number of the 
second kind, i.e., the
number of set-partitions of $[n]$ into $r$ blocks
(see \cite{PoSh}).
Further, we have already observed that the standard monomials
of $\frac{R}{I_{\mathfrak{S}_n}^{[\mathbf{n}]}}$ is given by
$\dim_k\left(\frac{R}{I_{\mathfrak{S}_n}^{[\mathbf{n}]}}\right)
= |{\rm PF}_n|=(n+1)^{n-1}$.

For various subsets 
$S \subseteq \mathfrak{S}_n$, 
the Alexander dual $I_S^{[\mathbf{n}]}$ of $I_S$ 
with respect to
$\mathbf{n}=(n,\ldots,n)$ has many interesting 
properties similar to
the Alexander dual of permutohedron ideal. The Betti numbers
and enumeration of standard monomials of the Alexander dual
$I_S^{[\mathbf{n}]}$ for subsets $S=\mathfrak{S}_n(132,231)$,
$\mathfrak{S}_n(123,132)$ and $\mathfrak{S}_n(123,132,213)$
are obtained in \cite{AC2, AC3}

Let $W=\mathfrak{S}_n(132,312)$. The monomial ideal 
$I_W$ of $R$
is called a {\em hypercubic ideal} in \cite{AC1}. The standard
monomials of $\frac{R}{I_W^{[\mathbf{n}]}}$ correspond 
bijectively to a subset $\widetilde{{\rm PF}}_n$ of ${\rm PF}_n$.
An element $\mathbf{p} \in \widetilde{{\rm PF}}_n$ is called a 
{\em restricted parking function} of length $n$. We show that the 
number of restricted parking functions of length $n$ is given by 
\[
\dim_k\left(\frac{R}{I_W^{[\mathbf{n}]}}\right) = 
|\widetilde{{\rm PF}}_n| = \sum_{r=1}^n (r!) ~s(n,r),
\]
where $s(n,r)$ is the (signless) Stirling number of 
the first kind, i.e., the
number of permutations of $[n]$ having exactly $r$ cycles in 
its cyclic decomposition. 
Thus the $n$th term of integer sequence
(A007840) in OEIS \cite{S} can be interpreted as the number of 
restricted parking functions of length $n$, 
or equivalently, as
the number of standard monomials of 
the Artinian quotient
$\frac{R}{I_W^{[\mathbf{n}]}}$.

The concept of pattern avoiding permutations has been generalized
to many combinatorial objects. A notion of rooted forests
that avoids a set of permutations is introduced and many classes
of such objects are enumerated in \cite{AnAr}. Let $F_n$ be the
set of rooted-labelled forests on $[n]$. Let 
$F_n(\tau)$ (or more generally, $F_n(\tau^{(1)},\ldots,\tau^{(r)})$) be the 
subset of $F_n$ consisting of
rooted-labelled forests avoiding a pattern $\tau$ (or
a set of patterns $\{ \tau^{(1)},\ldots,\tau^{(r)} \}$). We have
\[ 
|F_n(213,312)|= \sum_{r=1}^n (r!) ~s(n,r) = 
|\widetilde{{\rm PF}}_n|. 
\]
It is surprising that enumeration
of standard monomials of $\frac{R}{I_W^{[\mathbf{n}]}}$ and
enumeration of rooted-labelled forests $F_n(213,312)$ avoiding
213 and 312-patterns are related. It is an interesting problem to 
construct an algorithmic bijection
$\phi : \widetilde{{\rm PF}}_n \longrightarrow F_n(213,312)$,
analogous to DFS-burning algorithm that could explain the 
relationship between these objects.

The monomial ideal $I_S$ for many other subsets 
$S \subseteq \mathfrak{S}_n$, consisting of permutations
avoiding patterns are considered in the last section.

\section{Hypercubic ideals and restricted Parking functions}
Consider the subset $W=\mathfrak{S}_n(132,312)$ of permutations
of $[n]$ that avoid 132 and 312-patterns. 
For $\sigma \in \mathfrak{S}_n$,
it can be easily checked that 
$\sigma \in W$ if and only if 
$\sigma(1) \in [n]$ is arbitrary,
and $\sigma(j) = \ell$ for $j > 1$ if either
$\sigma(i) =  \ell+1$ or $\sigma(i) = \ell -1$ for
some $ i < j$. Clearly, $|W|= 2^{n-1}$. The monomial ideal 
$I_W$ appeared in \cite{AC1}, where it is called a
{\em hypercubic ideal}. Many properties of $I_W$ and its
Alexander dual $I_W^{[\mathbf{n}]}$ with respect to
$\mathbf{n}=(n,\ldots,n) \in \mathbb{N}^n$ 
have been obtained in \cite{AC1}. We proceed to enumerate 
the standard monomials of $\frac{R}{I_W^{[\mathbf{n}]}}$. For 
this purpose, we consider a little generalization.

Let $\mathbf{u}=(u_1,\ldots,u_n) \in \mathbb{N}^n$ with
$1 \le u_1 < u_2 < \ldots < u_n$. For $\sigma \in 
\mathfrak{S}_n$, let $\sigma\mathbf{u}=(u_{\sigma(1)},\ldots,
u_{\sigma(n)})$ and 
$\mathbf{x}^{\sigma\mathbf{u}}=
\prod_{i=1}^n x_i^{u_{\sigma(i)}}$. For any nonempty subset
$S \subseteq \mathfrak{S}_n$, we consider the monomial
ideal $I_S(\mathbf{u})= \left\langle\mathbf{x}^{\sigma\mathbf{u}}
: \sigma \in S \right\rangle$ in the polynomial ring
$R=k[x_1,\ldots,x_n]$. Clearly,
$I_S((1,2,\ldots,n)) =I_S$. 
The ideals
$I_{\mathfrak{S}_n}(\mathbf{u})$ and $I_W(\mathbf{u})$
are also called a {\em permutohedron ideal } and 
 a {\em hypercubic ideal}, respectively.
For an integer $c \ge 1$, we consider the 
Alexander dual $I_W(\mathbf{u})^{[\mathbf{u_n+c-1}]}$
of the hypercubic ideal $I_W(\mathbf{u})$ with respect to 
$\mathbf{u_n+c-1}=(u_n+c-1,\ldots,u_n+c-1) \in \mathbb{N}^n$.

\begin{proposition}
The minimal generators of 
$I_W(\mathbf{u})^{[\mathbf{u_n+c-1}]}$ are given by
\[
I_W(\mathbf{u})^{[\mathbf{u_n+c-1}]} = \left\langle 
\prod_{j \in T} x_j^{\mu_{j,T}^{\mathbf{u}}} : \emptyset
\ne T =\{j_1,\ldots,j_t\} \subseteq [n]; j_1 < \ldots < j_t \right\rangle,
\]
where 
$\mu_{j_1,T}^{\mathbf{u}} = u_n-u_t+c$ and
$\mu_{j_i,T}^{\mathbf{u}} = u_n-u_{t+j_i-i}+c $
for $2 \le i \le t$.
\label{P1}
\end{proposition}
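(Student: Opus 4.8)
The plan is to compute the Alexander dual directly from its definition as an intersection of irreducible (Artinian) ideals and then read off the minimal generators. Write $\mathbf{a}=\mathbf{u_n+c-1}$. Every generator $\mathbf{x}^{\sigma\mathbf{u}}$ of $I_W(\mathbf{u})$ has full support with each exponent $u_{\sigma(i)}$ lying between $u_1\ge 1$ and $u_n\le u_n+c-1$, so it divides $\mathbf{x}^{\mathbf{a}}$ and the Alexander dual is defined. By the Miller--Sturmfels formula,
\[
I_W(\mathbf{u})^{[\mathbf{a}]}=\bigcap_{\sigma\in W}\mathfrak{m}^{\mathbf{a}\setminus\sigma\mathbf{u}},\qquad \mathfrak{m}^{\mathbf{a}\setminus\sigma\mathbf{u}}=\big\langle\, x_i^{\,u_n-u_{\sigma(i)}+c}:i\in[n]\,\big\rangle .
\]
Hence a monomial $\mathbf{x}^{\mathbf{d}}$ lies in the dual if and only if for every $\sigma\in W$ there is a coordinate $i$ with $d_i\ge u_n-u_{\sigma(i)}+c$, equivalently (since the $u_j$ increase) with $u_{\sigma(i)}\ge u_n+c-d_i$. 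The minimal generators are then the monomials that are minimal, in the divisibility order, subject to meeting one such threshold for every $\sigma\in W$.

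Next I would verify that each candidate $g_T=\prod_{j\in T}x_j^{\mu_{j,T}^{\mathbf{u}}}$ lies in the dual. Substituting $\mu_{j_1,T}^{\mathbf{u}}=u_n-u_t+c$ and $\mu_{j_i,T}^{\mathbf{u}}=u_n-u_{t+j_i-i}+c$ turns the membership condition into a purely combinatorial statement about $W$: for every $\sigma\in W$ one has $\sigma(j_1)\ge t$, or $\sigma(j_i)\ge t+j_i-i$ for some $2\le i\le t$. I would prove this using the structural description of $W$ recalled above, namely that the value sets $\{\sigma(1),\dots,\sigma(j)\}$ are intervals for every $j$: if all these inequalities failed, the positions $j_1<\dots<j_t$ would be forced to carry values confined to an initial segment too small to grow into the required nested intervals, a contradiction. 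This gives the inclusion $\langle g_T : \emptyset\ne T\subseteq[n]\rangle\subseteq I_W(\mathbf{u})^{[\mathbf{a}]}$.

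For the reverse inclusion and minimality I would pass through the Alexander-duality correspondence between minimal generators of $I_W(\mathbf{u})^{[\mathbf{a}]}$ and irreducible components of $I_W(\mathbf{u})$: the candidate $g_T$ corresponds to the irreducible ideal $\mathfrak{m}^{\mathbf{c}_T}$ supported on $T$ with $(\mathbf{c}_T)_{j_1}=u_t$ and $(\mathbf{c}_T)_{j_i}=u_{t+j_i-i}$. I would establish $I_W(\mathbf{u})=\bigcap_{\emptyset\ne T\subseteq[n]}\mathfrak{m}^{\mathbf{c}_T}$, where the inclusion $\subseteq$ is exactly the combinatorial claim above applied to each generator, and $\supseteq$ amounts to showing that any monomial $\mathbf{x}^{\mathbf{d}}$ not divisible by any $\mathbf{x}^{\sigma\mathbf{u}}$ satisfies $d_j<(\mathbf{c}_T)_j$ for all $j\in T$ for the set $T$ read off from its staircase corner, hence lies outside $\mathfrak{m}^{\mathbf{c}_T}$. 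I would then prove irredundancy by exhibiting, for each $T$, the witness monomial with exponents $(\mathbf{c}_T)_j-1$ on $T$ and a large power off $T$; it lies in every $\mathfrak{m}^{\mathbf{c}_{T'}}$ with $T'\ne T$ but not in $\mathfrak{m}^{\mathbf{c}_T}$. Irredundancy forces each $g_T$ to be a genuine minimal generator and rules out any others.

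The main obstacle is this combinatorial core: proving simultaneously that the $2^n-1$ sets $T$ index \emph{precisely} the irreducible components of $I_W(\mathbf{u})$, neither omitting a corner of the staircase nor introducing a redundant one. Both the membership inequality and the witness-monomial comparison reduce to inequalities among the $u_j$ indexed by the relative order of the elements of $T$, and verifying these for all $T'\subsetneq T$ is where the special interval-building shape of the $132,312$-avoiding permutations must be used with care. Once this is in place, the displayed formula for the exponents $\mu_{j,T}^{\mathbf{u}}$ follows at once from the duality operation $\mathbf{c}_T\mapsto\mathbf{a}\setminus\mathbf{c}_T$.
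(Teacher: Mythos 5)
Your framework is the right one, and most of the scaffolding is correct: the Miller--Sturmfels description of the dual as $\bigcap_{\sigma\in W}\mathfrak{m}^{\mathbf{a}\setminus\sigma\mathbf{u}}$, the translation of $g_T\in I_W(\mathbf{u})^{[\mathbf{a}]}$ into the condition that every $\sigma\in W$ satisfies $\sigma(j_1)\ge t$ or $\sigma(j_i)\ge t+j_i-i$ for some $i\ge 2$, the identification $(\mathbf{c}_T)_{j_1}=u_t$, $(\mathbf{c}_T)_{j_i}=u_{t+j_i-i}$, and the witness-monomial argument for irredundancy (which indeed goes through: for $T'\subsetneq T$ compare the threshold $u_{|T'|}$ at $\min T'$ with the strictly larger entry of $\mathbf{c}_T$ there, and for $T'\not\subseteq T$ use the large exponent off $T$). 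But as it stands this is a program, not a proof, and the two statements carrying all of the content are exactly the ones left as intentions. First, the membership claim for $g_T$ is supported only by the sentence about values being ``confined to an initial segment too small to grow,'' which is not an argument. It can be made into one: assuming $\sigma(j_1)\le t-1$ and $\sigma(j_i)\le t+j_i-i-1$ for all $i\ge 2$, use that each prefix value-set of $\sigma\in W$ is an interval of consecutive integers which grows by one at either end; an up-step at a position $j_i$ caps the top of the interval at $t+j_i-i-1$, a down-step does not raise it, so the top of the interval at position $j_t$ is at most $j_t-1$, while the values above that top must occupy the $n-j_t$ positions after $j_t$, forcing the top to be at least $j_t$ --- a contradiction. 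Some such argument must actually be written. Second, and more seriously, the reverse inclusion $\bigcap_{T}\mathfrak{m}^{\mathbf{c}_T}\subseteq I_W(\mathbf{u})$ --- equivalently, producing for every monomial $\mathbf{x}^{\mathbf{d}}\notin I_W(\mathbf{u})$ a set $T$ with $d_j<(\mathbf{c}_T)_j$ for all $j\in T$ --- is dispatched with the phrase ``the set $T$ read off from its staircase corner,'' but no construction of $T$ from $\mathbf{d}$ is given and none of the required inequalities are verified. That is the harder half of the decomposition, and it is entirely missing; you acknowledge as much when you call it the main obstacle.

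For comparison, the paper does not reprove this statement at all: its proof is a citation of Theorem 3.3 of \cite{AC1}, which is precisely the case $c=1$ (dualizing vector $\mathbf{u_n}$), together with the observation that the same argument applies verbatim with $\mathbf{u_n}$ replaced by $\mathbf{u_n+c-1}$. So what you are attempting is a self-contained reconstruction of that cited theorem. That is a legitimate and even worthwhile route --- your dual-side bookkeeping is the correct skeleton for it --- but until the two combinatorial cores above are supplied, the proposal does not establish the proposition.
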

\begin{proof}
The minimal generators of $I_W(\mathbf{u})^{[\mathbf{u_n}]}$ 
are
given in Theorem 3.3 of \cite{AC1}. Just replace 
$[\mathbf{u_n}]$ by $[\mathbf{u_n+c-1}]$. \hfill $\square$
\end{proof}

The Alexander dual 
$I_{\mathfrak{S}_n}(\mathbf{u})^{[\mathbf{u_n+c-1}]}$ of
the permutohedron ideal 
$I_{\mathfrak{S}_n}(\mathbf{u})$ is given by 
\[
I_{\mathfrak{S}_n}(\mathbf{u})^{[\mathbf{u_n+c-1}]}
= \left\langle 
\left(\prod_{j \in T} x_j\right)^{u_n-u_{|T|}+c} : 
 T \in \Sigma_n \right\rangle,
\]
where $\Sigma_n$ is the poset of all nonempty subsets of
$[n]$ ordered by inclusion.
Postnikov and Shapiro \cite{PoSh} showed that the monomial 
ideal $I_{\mathfrak{S}_n}(\mathbf{u})^{[\mathbf{u_n+c-1}]}$
is an order monomial ideal. Moreover, the minimal resolution
of $I_{\mathfrak{S}_n}(\mathbf{u})^{[\mathbf{u_n+c-1}]}$ is
the cellular resolution supported on the order complex
$\Delta(\Sigma_n)$ of $\Sigma_n$. Thus, the $i^{th}$ Betti number
\[
\beta_i(I_{\mathfrak{S}_n}(\mathbf{u})^{[\mathbf{u_n+c-1}]})
= (i!) S(n+1,i+1); \quad (0 \le i \le n-1), 
\]
where 
$S(n+1,i+1)$ is the Stirling number of the second kind. Further,
standard monomials of 
$\frac{R}{I_{\mathfrak{S}_n}(\mathbf{u})^{[\mathbf{u_n+c-1}]}}$ 
are given in terms of
$\lambda$-parking functions. Let $\lambda = (\lambda_1,\ldots,
\lambda_n)$ with $\lambda_i = u_n - u_i + c$. A sequence
$\mathbf{p}=(p_1,\ldots,p_n) \in \mathbb{N}^n$ is called
a {\em $\lambda$-parking function} of length $n$, if 
non-decreasing rearrangement
$p_{i_1} \le p_{i_2}
\le \ldots \le p_{i_n}$ of $\mathbf{p}$ satisfies 
$p_{i_j} < \lambda_{n-j+1}$ 
for $1 \le j \le n$. Let ${\rm PF}_n(\lambda)$ be the
set of $\lambda$-parking functions of length $n$. Then
$\mathbf{x}^{\mathbf{p}}$ is a standard monomial of 
$\frac{R}{I_{\mathfrak{S}_n}(\mathbf{u})^{[\mathbf{u_n+c-1}]}}$
if and only if $\mathbf{p} \in {\rm PF}_n(\lambda)$. Also, 
$\lambda$-parking functions for $\lambda=(n,n-1,\ldots,1)$ are
precisely (ordinary) parking functions of length $n$,
that is, ${\rm PF}_n((n,n-1,\ldots,1)) = {\rm PF}_n$.

The Alexander dual
$I_S^{[\mathbf{n}]}$ of $I_S$ is an order monomial ideal for  
$S=\mathfrak{S}_n(132,231)$,
$\mathfrak{S}_n(123,132)$ and $\mathfrak{S}_n(123,132,213)$
(see \cite{AC2, AC3}). The minimal generators of 
$I_W(\mathbf{u})^{[\mathbf{u_n+c-1}]}$ correspond to 
elements of poset $\Sigma_n$. The monomial ideal 
$I_W(\mathbf{u})^{[\mathbf{u_n+c-1}]}$ is also
an order monomial ideal and its minimal resolution is the 
cellular resolution supported on the order complex
$\Delta(\Sigma_n)$ of $\Sigma_n$. Thus,
the $i^{th}$ Betti number
$\beta_i(I_W(\mathbf{u})^{[\mathbf{u_n+c-1}]})
= (i!)~ S(n+1,i+1)$ for $0 \le i \le n-1$.

We now describe standard monomials of
$\frac{R}{I_W(\mathbf{u})^{[\mathbf{u_n+c-1}]}}$. Since
$I_W(\mathbf{u}) \subseteq I_{\mathfrak{S}_n}(\mathbf{u})$, 
we have $I_{\mathfrak{S}_n}(\mathbf{u})^{[\mathbf{u_n+c-1}]}
\subseteq I_W(\mathbf{u})^{[\mathbf{u_n+c-1}]}$. Hence,
standard monomials of
$\frac{R}{I_W(\mathbf{u})^{[\mathbf{u_n+c-1}]}}$ are of the
form $\mathbf{x}^{\mathbf{p}}$ for some $\mathbf{p}
\in {\rm PF}_n(\lambda)$.

\begin{definition}
{\rm A $\lambda$-parking function $\mathbf{p}=
(p_1,\ldots,p_n) \in {\rm PF}_n(\lambda)$ is 
said to be a {\em restricted $\lambda$-parking function} of
length $n$ if there exists a permutation $\alpha \in 
\mathfrak{S}_n$ such that $ p_{\alpha_i} < 
\mu_{\alpha_i,T_i}^{\mathbf{u}}$ for all $1 \le i \le n$, where
$\alpha_i=\alpha(i), ~T_1=[n],~ 
T_i=[n]\setminus \{\alpha_1,\ldots,\alpha_{i-1}\};~ (i \ge 2)$
and $\mu_{j,T}^{\mathbf{u}}$ is as 
in Proposition \ref{P1}}. 
\end{definition}  

Let $\widetilde{{\rm PF}}_n(\lambda)$
be the set of restricted $\lambda$-parking functions of
length $n$. For $\mathbf{u}=(1,2,\ldots,n)$ and $c=1$,
 we have $\lambda= (n,n-1,\ldots,1)$. In this case, 
a restricted $\lambda$-parking function is called a 
{\em  restricted parking function} of length $n$ and  we
simply  write $\widetilde{{\rm PF}}_n$ 
for $\widetilde{{\rm PF}}_n(\lambda)$.
Also, $\mu_{j,T} = \mu_{j,T}^{\mathbf{u}}$ is 
given by
  $\mu_{j_1,T} = n-t+1$ and $\mu_{j_i,T}= (n-t+1)-(j_i-i); 
  ~i \ge 2$, 
where $\emptyset \ne 
 T=\{j_1,\ldots,j_t\} \subseteq [n]$ with
  $j_1 < \ldots < j_t$.  

\begin{proposition}
A monomial $\mathbf{x}^{\mathbf{p}}$ is a standard monomial
of $\frac{R}{I_W(\mathbf{u})^{[\mathbf{u_n+c-1}]}}$
if and only if 
$\mathbf{p}\in \widetilde{{\rm PF}}_n(\lambda)$ is a 
restricted $\lambda$-parking function of length $n$, with
$\lambda_i = u_n-u_i+c; (1 \le i \le n)$. In particular,
a monomial $\mathbf{x}^{\mathbf{p}}$ is a standard monomial
of $\frac{R}{I_W^{[\mathbf{n}]}}$
if and only if 
$\mathbf{p}\in \widetilde{{\rm PF}}_n$ is a 
restricted parking function of length $n$.
\label{P2}
\end{proposition}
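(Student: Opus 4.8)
The plan is to translate the statement ``$\mathbf{x}^{\mathbf{p}}$ is a standard monomial'' into a purely combinatorial divisibility condition on $\mathbf{p}$ using the explicit generators of Proposition \ref{P1}, and then to show that this condition is equivalent to the restricted $\lambda$-parking condition by isolating a single monotonicity property of the exponents $\mu_{j,T}^{\mathbf{u}}$. The two directions of the equivalence will then both be short: one is a greedy peeling argument and the other is an application of the monotonicity property.

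First I would record that $\mathbf{x}^{\mathbf{p}}$ is a standard monomial of $\frac{R}{I_W(\mathbf{u})^{[\mathbf{u_n+c-1}]}}$ exactly when no minimal generator from Proposition \ref{P1} divides it; that is, the condition (S): for every nonempty $T \subseteq [n]$ there is some $j \in T$ with $p_j < \mu_{j,T}^{\mathbf{u}}$. What remains is to prove that (S) holds if and only if there is an ordering $\alpha \in \mathfrak{S}_n$ with $p_{\alpha_i} < \mu_{\alpha_i,T_i}^{\mathbf{u}}$ along the chain $T_1 = [n] \supsetneq T_2 \supsetneq \cdots \supsetneq T_n$, where $T_i = [n]\setminus\{\alpha_1,\ldots,\alpha_{i-1}\}$. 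Since $I_{\mathfrak{S}_n}(\mathbf{u})^{[\mathbf{u_n+c-1}]} \subseteq I_W(\mathbf{u})^{[\mathbf{u_n+c-1}]}$, every standard monomial already satisfies $\mathbf{p} \in {\rm PF}_n(\lambda)$ as noted above, so the $\lambda$-parking precondition in the definition of $\widetilde{{\rm PF}}_n(\lambda)$ is automatic and only the chain condition has to be matched.

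The key step, which I expect to be the main obstacle, is a monotonicity lemma for the exponents under passage to subsets: for $j \in B \subseteq T'$ one has $\mu_{j,B}^{\mathbf{u}} \ge \mu_{j,T'}^{\mathbf{u}}$. Writing $\mu_{j,T}^{\mathbf{u}} = u_n + c - u_{s(j,T)}$ with $s(j,T) = |T|$ when $j = \min T$ and $s(j,T) = j + |\{k \in T : k > j\}|$ otherwise (this re-expresses the index $|T| + j_i - i$ of Proposition \ref{P1}), and using that $u$ is increasing, it suffices to prove $s(j,B) \le s(j,T')$. This follows from a short case check on whether $j$ is the minimum of $B$ and of $T'$; the only delicate case is $j = \min B \ne \min T'$, where $s(j,B) = |B| = 1 + |\{k \in B : k > j\}|$ whereas $s(j,T') = j + |\{k \in T' : k > j\}| \ge 2 + |\{k \in B : k > j\}|$ because $\min T' < j$ forces $j \ge 2$. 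The special treatment of the minimal element in the formula of Proposition \ref{P1} is exactly what makes this comparison nontrivial.

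With the lemma in hand both implications are immediate. For (S) implies restricted, I would peel greedily: beginning with $T_1 = [n]$, condition (S) applied to the nonempty current set $T_i$ yields $\alpha_i \in T_i$ with $p_{\alpha_i} < \mu_{\alpha_i,T_i}^{\mathbf{u}}$; removing it and iterating produces a witnessing $\alpha$. Conversely, given a witness $\alpha$ and an arbitrary nonempty $T$, let $\alpha_{i_1}$ be the first element of $T$ to be removed, so that $T \subseteq T_{i_1}$ and $\alpha_{i_1} \in T$; then $p_{\alpha_{i_1}} < \mu_{\alpha_{i_1},T_{i_1}}^{\mathbf{u}} \le \mu_{\alpha_{i_1},T}^{\mathbf{u}}$ by the monotonicity lemma, verifying (S) for $T$. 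Finally I would specialize to $\mathbf{u} = (1,\ldots,n)$ and $c = 1$, where $\lambda = (n,n-1,\ldots,1)$, to deduce the ``in particular'' statement for $\frac{R}{I_W^{[\mathbf{n}]}}$ and ordinary restricted parking functions.
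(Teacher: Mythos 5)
Your proof is correct, and it supplies an argument where the paper gives essentially none: the paper's proof of Proposition \ref{P2} consists of citing Theorem 4.3 of \cite{AC1} (which handles the case $c=1$, i.e.\ the dual with respect to $\mathbf{u_n}$) and asserting that ``proceeding on similar lines'' gives the general statement. Your proposal is exactly the kind of argument that citation hides, and every step checks out: the reduction of standardness to condition (S) via the generators of Proposition \ref{P1}; the observation that the $\lambda$-parking precondition is automatic from $I_{\mathfrak{S}_n}(\mathbf{u})^{[\mathbf{u_n+c-1}]} \subseteq I_W(\mathbf{u})^{[\mathbf{u_n+c-1}]}$, so only the chain condition needs matching; the monotonicity lemma $\mu_{j,B}^{\mathbf{u}} \ge \mu_{j,T'}^{\mathbf{u}}$ for $j \in B \subseteq T'$, whose single delicate case ($j=\min B$ but $j \ne \min T'$, whence $j \ge 2$) you resolve correctly via the re-indexing $s(j,T)=j+|\{k \in T : k>j\}|$; the greedy peeling for (S) implies the chain condition; and the first-removed-element argument combined with monotonicity for the converse. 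One side remark: your argument in fact shows that the chain condition alone forces $\mathbf{p} \in {\rm PF}_n(\lambda)$ (chain condition implies (S), implies standard, implies $\lambda$-parking by the ideal inclusion), so the $\lambda$-parking clause in the paper's definition of restricted $\lambda$-parking functions is logically redundant --- harmless, but worth recording. What your route buys over the paper's is self-containedness: it isolates the precise property of the exponents $\mu_{j,T}^{\mathbf{u}}$ --- weak monotonicity under passage to subsets, resting ultimately on $u_1 < u_2 < \cdots < u_n$ --- that makes the permutation/chain characterization of standard monomials work, a point invisible in the paper's two-line proof by reference.
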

\begin{proof}
Standard monomials of 
$\frac{R}{I_W(\mathbf{u})^{[\mathbf{u_n}]}}$ are
characterized in Theorem 4.3 of \cite{AC1}. Proceeding on similar
lines, we get the desired result. \hfill $\square$
\end{proof}

Using the cellular resolution of 
$I_W(\mathbf{u})^{[\mathbf{u_n+c-1}]}$ supported on the order 
complex $\Delta(\Sigma_n)$, we obtain the multigraded
Hilbert series 
$H\left(\frac{R}{I_W(\mathbf{u})^{[\mathbf{u_n+c-1}]}}\right)$
of $\frac{R}{I_W(\mathbf{u})^{[\mathbf{u_n+c-1}]}}$. Proceeding 
as in the proof of Proposition 4.5 of \cite{AC1}, we get a combinatorial
formula 
\begin{eqnarray}
\label{EQ1}
| \widetilde{{\rm PF}}_n(\lambda) |
& = & 
\dim_k\left(\frac{R}{I_W(\mathbf{u})^{[\mathbf{u_n+c-1}]}}
\right) \\
& = & \sum_{i=1}^n (-1)^{n-i} \sum_{{\emptyset=A_0 \subsetneq A_1
\subsetneq \ldots \subsetneq A_i = [n]}} \prod_{q=1}^i 
\left( \prod_{j \in A_q \setminus A_{q-1}} 
\mu_{j,A_q}^{\mathbf{u}} 
\right) \nonumber
\end{eqnarray}
for enumeration of standard monomials
of $\frac{R}{I_W(\mathbf{u})^{[\mathbf{u_n+c-1}]}}$,
where $\mu_{j,A_q}^{\mathbf{u}}$ is as in Proposition \ref{P1}.
Let $\mathcal{C}$ be a chain in $\Sigma_n$ of the form
\[
\mathcal{C} : A_1 \subsetneq A_2 \subsetneq \ldots
\subsetneq A_i=[n] 
\]
of length $\ell(\mathcal{C})=i-1$ and 
 let $\mu^{\mathbf{u}}(\mathcal{C}) =
\prod_{q=1}^i 
\left( \prod_{j \in A_q \setminus A_{q-1}} 
\mu_{j,A_q}^{\mathbf{u}} \right) $, where $A_0=\emptyset$.
Suppose
$\mathfrak{Ch}([n])$ is the set of such chains $\mathcal{C}$
in $\Sigma_n$.
Then formula (\ref{EQ1}) can be expressed compactly as
\begin{equation}
| \widetilde{{\rm PF}}_n(\lambda) |
=  
\dim_k\left(\frac{R}{I_W(\mathbf{u})^{[\mathbf{u_n+c-1}]}}
\right)=
\sum_{\mathcal{C} \in \mathfrak{Ch}([n])} 
(-1)^{n - \ell(\mathcal{C}) - 1}  \mu^{\mathbf{u}}(\mathcal{C}) .
\label{EQ2}
\end{equation}

We now take $u_i=i$ in (\ref{EQ2}). For $ c \ge 1$,
let $\dim_k\left(\frac{R}{I_W^{[\mathbf{n+c-1}]}} \right)
 =a_n(c)$. Then we see that 
$a_n(c)$ is a polynomial expression in $c$ of degree $n$ for
$n \ge 1$. In fact, $a_1(c) = c$ and $a_2(c) = c^2+2c$. 

\begin{lemma}
Let $n \ge 3, \mathbf{u}=(1,2,\ldots,n)$ and $c \ge 1$. For
a chain $\mathcal{C} \in \mathfrak{Ch}[n]$ of length $i-1$ of 
the form $A_1 \subsetneq \ldots A_r 
\subsetneq A_{r+1} \subsetneq
\ldots \subsetneq A_i=[n]$ with $n \in
 A_{r+1}\setminus A_r$ and
$|A_{r+1} \setminus A_r| \ge 2$, there exists a unique chain,
namely $\widetilde{\mathcal{C}}: 
A_1 \subsetneq \ldots A_r \subsetneq A_r \cup 
\{n\} \subsetneq A_{r+1} \subsetneq
\ldots \subsetneq A_i=[n]$ in $\mathfrak{Ch}[n]$ of length
$i$ such that $\mu^{\mathbf{u}}(\mathcal{C})=
\mu^{\mathbf{u}}(\widetilde{\mathcal{C}})$.
\label{L1} 
\end{lemma}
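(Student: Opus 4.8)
The plan is to exploit that $\mathcal{C}$ and $\widetilde{\mathcal{C}}$ are identical except around the single covering pair $A_r \subsetneq A_{r+1}$, which $\widetilde{\mathcal{C}}$ refines into the two steps $A_r \subsetneq A_r \cup \{n\} \subsetneq A_{r+1}$. First I would check that $\widetilde{\mathcal{C}}$ is a genuine member of $\mathfrak{Ch}([n])$ of length $i$: since $n \in A_{r+1}\setminus A_r$ we have $A_r \subsetneq A_r \cup \{n\}$, and since $|A_{r+1}\setminus A_r| \ge 2$ the set $A_{r+1}$ contains an element outside $A_r$ other than $n$, so $A_r \cup \{n\} \subsetneq A_{r+1}$. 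Inserting this one set in this one position is the only way to produce such a refinement, which settles the uniqueness assertion.

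For the equality of weights I would compare $\mu^{\mathbf{u}}(\mathcal{C})$ and $\mu^{\mathbf{u}}(\widetilde{\mathcal{C}})$ factor by factor. Every factor coming from a step strictly below $A_r$ or strictly above $A_{r+1}$ appears identically in both products, so the comparison reduces to matching the contribution of the single step $A_r \subsetneq A_{r+1}$ in $\mathcal{C}$, namely $\prod_{j \in A_{r+1}\setminus A_r} \mu_{j,A_{r+1}}^{\mathbf{u}}$, against the contribution of the two steps in $\widetilde{\mathcal{C}}$, namely $\mu_{n, A_r \cup \{n\}}^{\mathbf{u}} \cdot \prod_{j \in A_{r+1}\setminus (A_r \cup \{n\})} \mu_{j,A_{r+1}}^{\mathbf{u}}$. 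Because $n \in A_{r+1}\setminus A_r$, the left-hand product splits as $\mu_{n,A_{r+1}}^{\mathbf{u}} \cdot \prod_{j \in A_{r+1}\setminus (A_r \cup \{n\})} \mu_{j,A_{r+1}}^{\mathbf{u}}$, whose second factor is exactly the product appearing on the right. Hence the whole statement collapses to the single identity $\mu_{n,A_{r+1}}^{\mathbf{u}} = \mu_{n, A_r \cup \{n\}}^{\mathbf{u}}$.

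The crux is then a direct computation with the formula of Proposition \ref{P1}. The key observation is that $n$ is the largest index, so in any subset $T \ni n$ it occupies the last position $t = |T|$; provided $t \ge 2$, the $i \ge 2$ branch of the formula gives $\mu_{n,T}^{\mathbf{u}} = u_n - u_{t + n - t} + c = u_n - u_n + c = c$, a constant independent of $T$. I would apply this to $T = A_{r+1}$, which satisfies $|A_{r+1}| \ge 2$ since $|A_{r+1}\setminus A_r| \ge 2$, and to $T = A_r \cup \{n\}$, which has at least two elements because the chain begins with the nonempty set $A_1 \subseteq A_r$, forcing $A_r \ne \emptyset$ and hence $|A_r \cup \{n\}| \ge 2$. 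Both therefore equal $c$, establishing the required identity.

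I expect the only delicate point to be bookkeeping: correctly locating the position of $n$ in each subset and invoking the right branch of the piecewise definition of $\mu_{j,T}^{\mathbf{u}}$. In particular the implicit hypothesis $A_r \ne \emptyset$ is essential, since if $A_r$ were empty then $A_r \cup \{n\} = \{n\}$ would fall in the $i=1$ branch, giving $\mu_{n,\{n\}}^{\mathbf{u}} = u_n - u_1 + c \ne c$ and breaking the identity. Once the two size conditions are secured the argument is purely computational, and no further idea is needed.
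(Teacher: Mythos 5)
Your proof is correct and takes essentially the same route as the paper's: both reduce the weight equality to the single identity $\mu_{n,A_r\cup\{n\}}^{\mathbf{u}}=\mu_{n,A_{r+1}}^{\mathbf{u}}$ and then compute from Proposition \ref{P1} that each side equals $c$, because $n$ sits in the last position of any subset containing it. Your explicit attention to the size conditions ($|A_{r+1}|\ge 2$, and $A_r\supseteq A_1\ne\emptyset$ so that the $i\ge 2$ branch of the formula applies) is left implicit in the paper, but it is the same argument.
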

\begin{proof}
Since $\mu^{\mathbf{u}}(\mathcal{C}) =
\prod_{q=1}^i 
\left( \prod_{j \in A_q \setminus A_{q-1}} 
\mu_{j,A_q}^{\mathbf{u}} \right) $, the equality
$\mu^{\mathbf{u}}(\mathcal{C})=
\mu^{\mathbf{u}}(\widetilde{\mathcal{C}})$ holds if
$\mu_{n,A_r \cup \{n\}}^{\mathbf{u}} = 
\mu_{n,A_{r+1}}^{\mathbf{u}}$. Clearly,
$\mu_{n,A_r \cup \{n\}}^{\mathbf{u}} = n-(|A_r|+1+n-(|A_r|+1))+c
=c$ and $\mu_{n,A_{r+1}}^{\mathbf{u}} = 
n-(|A_{r+1}|+n-|A_{r+1}|)+c=c$. \hfill $\square$
\end{proof}

Let ${\mathfrak{Ch}}'[n]$ be the set of chains in
$\Sigma_n$ obtained from $\mathfrak{Ch}[n]$ on deleting
chains $\mathcal{C}$ and $\widetilde{\mathcal{C}}$ appearing
in Lemma \ref{L1}. Then 
\[
a_n(c) = \sum_{\mathcal{C} \in \mathfrak{Ch}([n])} 
(-1)^{n - \ell(\mathcal{C}) - 1}  \mu^{\mathbf{u}}(\mathcal{C})
=\sum_{\mathcal{C} \in \mathfrak{Ch}'([n])} 
(-1)^{n - \ell(\mathcal{C}) - 1}  \mu^{\mathbf{u}}(\mathcal{C}) . 
\]
For $\mathbf{u}=(1,2,\ldots,n)$ and $c \ge 1$, the value 
$\mu^{\mathbf{u}}(\mathcal{C})$
depends on the chain $\mathcal{C}$ and $c$. Thus,
 we write $\mu^{c}(\mathcal{C})$
for $\mu^{\mathbf{u}}(\mathcal{C})$. Hence,
$a_n(c) = \sum_{\mathcal{C} \in \mathfrak{Ch}([n])} 
(-1)^{n - \ell(\mathcal{C}) - 1}  \mu^{c}(\mathcal{C})
=\sum_{\mathcal{C} \in \mathfrak{Ch}'([n])} 
(-1)^{n - \ell(\mathcal{C}) - 1}  \mu^{c}(\mathcal{C})$. 

For $n \ge 3$, the chains in ${\mathfrak{Ch}}'[n]$ can
 be divided
into three types. 
\begin{itemize}
\item A chain $\mathcal{C}: A_1 \subsetneq \ldots
\subsetneq A_i=[n]$ in ${\mathfrak{Ch}}'[n]$ is called a 
{\em Type-I} chain if $A_1=\{n\}$. The Type-I chains in
${\mathfrak{Ch}}'[n]$ are in one-to-one correspondence with
chains in ${\mathfrak{Ch}}[n-1]$. This correspondence is
given by 
\[\mathcal{C} \mapsto \mathcal{C}\setminus A_1:
A_2\setminus \{n\}
\subsetneq \ldots \subsetneq A_i \setminus \{n\} =[n-1].
\]
As $\ell(\mathcal{C}) -1=\ell(\mathcal{C}\setminus A_1)$ and
$\mu^c(\mathcal{C}) = (n-1+c)~ \mu^c(\mathcal{C}\setminus A_1)$,
we have
\[
\sum_{\substack{\mathcal{C}\in \mathfrak{Ch}'[n]; 
\\ {\rm Type-I}}} 
(-1)^{n-\ell(\mathcal{C})-1} ~ \mu^c(\mathcal{C}) = 
(n-1+c)~ a_{n-1}(c).
\]
\item A chain $\mathcal{C}: A_1 \subsetneq \ldots
\subsetneq A_i=[n]$ in ${\mathfrak{Ch}}'[n]$ is called a 
{\em Type-II} chain if $A_{i-1}=[n-1]$. The Type-II chains in 
${\mathfrak{Ch}}'[n]$ are in one-to-one correspondence with
chains in ${\mathfrak{Ch}}[n-1]$. This correspondence is
given by 
\[\mathcal{C} \mapsto \mathcal{C}|_{[n-1]}:A_1
\subsetneq \ldots \subsetneq A_{i-1} =[n-1].
\]
As $\ell(\mathcal{C}) -1=\ell(\mathcal{C}|_{[n-1]})$ and
$\mu^c(\mathcal{C}) = (c) ~\mu^{c+1}(\mathcal{C}|_{[n-1]})$,
we have
\[
\sum_{\substack{\mathcal{C}\in \mathfrak{Ch}'[n]; 
\\ {\rm Type-II}}} 
(-1)^{n-\ell(\mathcal{C})-1} ~\mu^c(\mathcal{C}) = 
(c)~ a_{n-1}(c+1).
\]
\item A chain $\mathcal{C}: A_1 \subsetneq \ldots
\subsetneq A_i=[n]$ in ${\mathfrak{Ch}}'[n]$ is called a 
{\em Type-III} chain if $n \in A_1$ and $|A_1| \ge 2$. The 
Type-III chains in ${\mathfrak{Ch}}'[n]$ are in one-to-one correspondence with
chains in ${\mathfrak{Ch}}[n-1]$. This correspondence is
given by 
\[\mathcal{C} \mapsto \mathcal{C}\setminus \{n\}:A_1\setminus \{n\}
\subsetneq \ldots \subsetneq A_i \setminus \{n\} =[n-1].
\]
As $\ell(\mathcal{C})= \ell(\mathcal{C}\setminus \{n\})$ and
$\mu^c(\mathcal{C}) = (c)~ \mu^{c}(\mathcal{C}\setminus \{n\})$,
we have
\[
\sum_{\substack{\mathcal{C}\in \mathfrak{Ch}'[n]; 
\\ {\rm Type-III}}} 
(-1)^{n-\ell(\mathcal{C})-1}~ \mu^c(\mathcal{C}) = 
(-c)~ a_{n-1}(c).
\]
\end{itemize}
Consider 
the poset $\Sigma_n$ and form
a poset $\Lambda_n=\Sigma_{n-1} \coprod 
(\Sigma_{n-1} \ast \{n\})$; for $n \ge 2$, where
$\Sigma_{n-1} \ast \{n\} = 
\{A \cup \{n\} : A \in \Sigma_{n-1}\}$ is a subposet of
$\Sigma_n$. Two elements
$A,B \in \Lambda_n$ are comparable if either
$A,B \in \Sigma_{n-1}$ are comparable or $A,B \in \Sigma_{n-1} \ast \{n\}$ are comparable
or $ \{A,B\}=\{[n-1],[n]\}$. The Hasse diagram 
of $\Lambda_n$ for
$n=3, 4$ are given in {\sc Figure-1}.

\begin{figure}[!hbpt]
{\begin{tikzpicture}
\node (max) at (0,1.5) {$123$};
\node (a) at (-1.5,0) {$12$};
\node (b) at (0,0) {$13$};
\node (c) at (1.5,0) {$23$};
\node (d) at (-1.5,-1.5) {$1$};
\node (e) at (0,-1.5) {$2$};
\node (f) at (0,-2.5) {$\Lambda_3$};
\draw (d) -- (a) -- (e);
\draw (a) -- (max) -- (b)(max) -- (c);
\end{tikzpicture}}
\hspace{1.5em}
{\begin{tikzpicture}
\node (max) at (10.5,3) {$1234$};
\node (a) at (5.5,1.5) {123};
\node (b) at (9,1.5) {124};
\node (c) at (10.5,1.5) {134};
\node (d) at (12,1.5) {234};
\node (e) at (4,0) {12};
\node (f) at (5.5,0) {13};
\node (g) at (9,0) {14};
\node (h) at (7,0) {23};
\node (i) at (10.5,0) {24};
\node (j) at (12,0) {34};
\node (k) at (4,-1.5) {1};
\node (l) at (5.5,-1.5) {2};
\node (m) at (7,-1.5) {3};
\node (n) at (9,-2.5) {$\Lambda_4$};
\draw (k) -- (e) -- (a);
\draw (a) -- (max) -- (b) -- (g) -- (c) -- (max) -- (d) -- (i)(c) -- (j) -- (d)(i) -- (b);
\draw(k) -- (f) -- (a) -- (h) -- (l) -- (h) -- (m)(l) -- (e)(m) -- (f);
\end{tikzpicture}}
\caption{}
\end{figure}

Clearly, Type-II chains in $\mathfrak{Ch}'[n]$ are chains in
$\Lambda_{n}$ with an edge $[n-1]\subsetneq [n]$, 
while Type-III chains in
$\mathfrak{Ch}'[n]$ are chains in $\Lambda_n$ containing $[n]$ 
but not $[n-1]$.
\begin{proposition}
For $n \ge 3$ and $c \ge 1$, $a_n(c) = 
\dim_k\left(\frac{R}{I_W^{[\mathbf{n+c-1}]}}\right)$ satisfies 
the recurrence relation
\[
a_n(c) = (n-1)a_{n-1}(c) + c~ a_{n-1}(c+1).
\]
\label{P3}
\end{proposition}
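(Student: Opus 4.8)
The plan is to assemble the three type-wise evaluations already carried out for the surviving chains in $\mathfrak{Ch}'[n]$ and then simplify. First I would invoke Lemma \ref{L1} as a sign-reversing involution: every chain $\mathcal{C}$ in which $n$ is introduced together with at least one other element at a step above the bottom is matched with the chain $\widetilde{\mathcal{C}}$ that first inserts $A_r \cup \{n\}$. Since $\ell(\widetilde{\mathcal{C}}) = \ell(\mathcal{C}) + 1$, the two carry opposite signs $(-1)^{n-\ell-1}$ while sharing the common weight $\mu^{\mathbf{u}}(\mathcal{C}) = \mu^{\mathbf{u}}(\widetilde{\mathcal{C}})$, so their contributions to (\ref{EQ2}) cancel. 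This is precisely the reduction
\[
a_n(c) = \sum_{\mathcal{C} \in \mathfrak{Ch}'([n])} (-1)^{n - \ell(\mathcal{C}) - 1}\, \mu^c(\mathcal{C})
\]
recorded just before the three bullet points.

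Next I would verify that the three types exhaust $\mathfrak{Ch}'[n]$ without overlap. Given a surviving chain, let $s$ be the step at which $n$ first appears. If $s = 1$, then $n \in A_1$: the case $|A_1| = 1$ gives $A_1 = \{n\}$ (Type-I) and $|A_1| \ge 2$ gives Type-III. If $s \ge 2$, then $n$ must enter alone, since otherwise $\mathcal{C}$ is of the form cancelled by Lemma \ref{L1}; and because the chain survives, the singleton step adding $n$ cannot be followed by a larger set, which forces $A_{s-1} = [n-1]$ and $A_s = [n]$, that is, Type-II. These three conditions ($A_1 = \{n\}$; $\;n \in A_1$ with $|A_1| \ge 2$; $\;n \notin A_1$ with $A_{i-1} = [n-1]$) are mutually exclusive, so $\mathfrak{Ch}'[n]$ is their disjoint union.

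Then I would simply add the three signed sums computed in the bullet points, namely $(n-1+c)\,a_{n-1}(c)$ for Type-I, $\;c\,a_{n-1}(c+1)$ for Type-II, and $-c\,a_{n-1}(c)$ for Type-III, obtaining
\[
a_n(c) = (n-1+c)\,a_{n-1}(c) + c\,a_{n-1}(c+1) - c\,a_{n-1}(c) = (n-1)\,a_{n-1}(c) + c\,a_{n-1}(c+1),
\]
where the $c\,a_{n-1}(c)$ terms from Types I and III cancel to leave the asserted recurrence.

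Since the cancellation of paired chains and the three bijections with $\mathfrak{Ch}[n-1]$ are already established above, the only genuine verification that remains is the classification in the second paragraph, and that is where I expect to be most careful. The main obstacle is making sure no surviving chain escapes the three types and that the boundary cases are handled correctly, in particular the distinction $r = 0$ versus $r \ge 1$ in Lemma \ref{L1} and the top-step case $A_r \cup \{n\} = [n]$, since the weight equality in Lemma \ref{L1} relies on $n$ being the maximal element of every set containing it.
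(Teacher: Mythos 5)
Your proposal is correct and takes essentially the same route as the paper: both reduce the alternating sum over $\mathfrak{Ch}[n]$ to $\mathfrak{Ch}'[n]$ via the cancellation in Lemma \ref{L1}, and then add the three type-wise contributions $(n-1+c)\,a_{n-1}(c) + c\,a_{n-1}(c+1) - c\,a_{n-1}(c)$ to get the recurrence. The only difference is that you explicitly verify that Types I--III exhaust $\mathfrak{Ch}'[n]$ and are mutually exclusive (in particular, that a surviving chain in which $n$ enters alone above the bottom must have that step at the top, forcing $A_{i-1}=[n-1]$), a point the paper asserts without proof; your verification is sound.
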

\begin{proof}
As $a_n(c) = \sum_{\mathcal{C} \in \mathfrak{Ch}([n])} 
(-1)^{n - \ell(\mathcal{C}) - 1}  \mu^{c}(\mathcal{C})
=\sum_{\mathcal{C} \in \mathfrak{Ch}'([n])} 
(-1)^{n - \ell(\mathcal{C}) - 1}  \mu^{c}(\mathcal{C})$, we have
\begin{eqnarray*}
a_n(c) & = & \left[\sum_{\substack{\mathcal{C}\in \mathfrak{Ch}'[n]; 
\\ {\rm Type-I}}} + \sum_{\substack{\mathcal{C}\in \mathfrak{Ch}'[n]; 
\\ {\rm Type-II}}} + \sum_{\substack{\mathcal{C}\in \mathfrak{Ch}'[n]; 
\\ {\rm Type-III}}} \right]
~(-1)^{n-\ell(\mathcal{C})-1}~ \mu^c(\mathcal{C}) \\
& = & (n-1+c)~ a_{n-1}(c) + (c)~ a_{n-1}(c+1) +(-c)~a_{n-1}(c)\\
& = & (n-1)~a_{n-1}(c) + (c)~a_{n-1}(c+1).
\end{eqnarray*}
\hfill $\square$
\end{proof}
Replacing $c$ by an indeterminate $x$, we consider polynomial
$a_n(x)$. The recurrence relation in Proposition \ref{P3} holds
for all $ c \ge 1$, thus there exists a polynomial identity
\begin{equation}
a_n(x) = (n-1)~ a_{n-1}(x) + x~a_{n-1}(x+1) 
\quad {\rm for}~~n \ge 3.
\label{EQ3}
\end{equation} 
Since $a_1(x) = x$ and $a_2(x) = x^2+2x$, on setting $a_0(x) =1$,
the recurrence relation (\ref{EQ3}) is valid for $n \ge 1$. Note
that $a_n(0) = 0$ for $n \ge 1$.

\begin{proposition}
For $n \ge 1$, $a_n(x) = 
\sum_{r=1}^n s(n,r)~ x(x+1)\cdots(x+r-1)$. 
\label{P4}
\end{proposition}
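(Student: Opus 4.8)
The plan is to prove the identity by induction on $n$, working in the basis of rising factorials and matching the polynomial recurrence (\ref{EQ3}) against the defining recurrence of the signless Stirling numbers of the first kind. Write $f_r(x) = x(x+1)\cdots(x+r-1)$ for the $r$-th rising factorial (with $f_0(x)=1$), so that the assertion to be proved reads $a_n(x) = \sum_{r=1}^n s(n,r)\, f_r(x)$. First I would dispose of the base cases $n=1$ and $n=2$ by direct comparison against $a_1(x)=x$ and $a_2(x)=x^2+2x$, using $s(1,1)=1$ and $s(2,1)=s(2,2)=1$; for instance $s(2,1)f_1(x)+s(2,2)f_2(x)=x+x(x+1)=x^2+2x$.

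For the inductive step I would assume $a_{n-1}(x)=\sum_{r=1}^{n-1} s(n-1,r)\, f_r(x)$ and substitute it into (\ref{EQ3}). The first summand $(n-1)\,a_{n-1}(x)$ contributes $\sum_{r=1}^{n-1} (n-1)\,s(n-1,r)\, f_r(x)$ unchanged. The crux is the second summand $x\,a_{n-1}(x+1)$: here I would invoke the shift identity $x\, f_r(x+1) = x(x+1)(x+2)\cdots(x+r) = f_{r+1}(x)$, which converts $x\,a_{n-1}(x+1)$ into $\sum_{r=1}^{n-1} s(n-1,r)\, f_{r+1}(x) = \sum_{r=2}^{n} s(n-1,r-1)\, f_r(x)$ after reindexing.

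Adding the two contributions and collecting the coefficient of $f_r(x)$ for each $1 \le r \le n$, with the boundary conventions $s(n-1,0)=0$ and $s(n-1,n)=0$, gives the coefficient $(n-1)\,s(n-1,r) + s(n-1,r-1)$. I would then recognize this as the recurrence $s(n,r) = (n-1)\,s(n-1,r) + s(n-1,r-1)$ satisfied by the signless Stirling numbers of the first kind, which identifies the coefficient with $s(n,r)$ and completes the induction.

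The computation is short, so I do not anticipate a serious obstacle; the single point that carries the argument is the shift identity $x\, f_r(x+1) = f_{r+1}(x)$, which is exactly what aligns the substitution $x\mapsto x+1$ built into (\ref{EQ3}) with the index shift $r\mapsto r+1$ in the rising-factorial basis. Once this is in place, the recurrence for $a_n(x)$ collapses onto the Stirling recurrence term by term, and the only remaining care is the bookkeeping of the empty boundary coefficients at $r=1$ and $r=n$.
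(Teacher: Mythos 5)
Your proof is correct and follows essentially the same route as the paper: expand $a_n(x)$ in the rising-factorial basis and show the coefficients obey the signless Stirling recurrence $s(n,r)=(n-1)\,s(n-1,r)+s(n-1,r-1)$ with matching initial conditions. The only difference is presentational—the paper asserts the coefficient recurrence in one line, while you make explicit the shift identity $x\,f_r(x+1)=f_{r+1}(x)$ that justifies it, which is a welcome filling-in of the paper's terse step.
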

\begin{proof}
Let $x^{\bar{r}} = x(x+1)\cdots(x+r-1)$ be the $r^{th}$ 
{\em rising power} of $x$. Then 
$\left\{ x^{\bar{r}} : r=0,1,\ldots\right\}$
is a $\mathbb{Q}$-basis of $\mathbb{Q}[x]$, where $x^{\bar{0}}=1$.
As $a_n(0)=0$ for $n \ge 1$, we can express
$a_n(x) = \sum_{r=1}^n \alpha_n(r) x^{\bar{r}} $. As $a_n(x)$
satisfy recurrence relation (\ref{EQ3}) for $n\ge 1$, it follows
that $\alpha_n(r)$ and the (signless) Stirling number $s(n,r)$ 
of the first kind satisfy the same 
recurrence relation with the same initial conditions (see \cite{St}).
Thus $\alpha_n(r) = s(n,r)$. \hfill $\square$
\end{proof}

\begin{theorem}
For $ n \ge 1$, $\dim_k\left(\frac{R}{I_W^{[\mathbf{n}]}}\right)
=a_n =
\sum_{r=1}^n (r!)~ s(n,r)$.
\label{T1}
\end{theorem}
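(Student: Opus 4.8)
The plan is to obtain the theorem as the single specialization $c = 1$ of the polynomial identity already established in Proposition \ref{P4}. First I would recall that $a_n(c)$ was defined as $\dim_k\!\left(\frac{R}{I_W^{[\mathbf{n+c-1}]}}\right)$, so that taking $c = 1$ makes the shift vector $\mathbf{n+c-1}$ equal to $\mathbf{n}$; hence $a_n = a_n(1) = \dim_k\!\left(\frac{R}{I_W^{[\mathbf{n}]}}\right)$, and it suffices to evaluate $a_n(1)$.

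Next I would invoke Proposition \ref{P4}, which gives the closed form $a_n(x) = \sum_{r=1}^n s(n,r)\, x^{\bar{r}}$ with $x^{\bar{r}} = x(x+1)\cdots(x+r-1)$ the $r$th rising power. Because this is a genuine polynomial identity and the recurrence (\ref{EQ3}) holds for every $c \ge 1$ (in particular $c=1$), the polynomial value $a_n(1)$ coincides with the actual dimension $\dim_k\!\left(\frac{R}{I_W^{[\mathbf{n}]}}\right)$. Setting $x = 1$ and using $1^{\bar{r}} = 1\cdot 2\cdots r = r!$ then yields $a_n(1) = \sum_{r=1}^n (r!)\, s(n,r)$, which is exactly the asserted formula.

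All of the substantive work has already been done upstream: rewriting the alternating chain sum (\ref{EQ2}) via the cancellation of Lemma \ref{L1} into $\mathfrak{Ch}'[n]$, splitting $\mathfrak{Ch}'[n]$ into the three chain types to derive the recurrence $a_n(c) = (n-1)a_{n-1}(c) + c\,a_{n-1}(c+1)$ in Proposition \ref{P3}, and solving it against the rising-factorial basis to pin the coefficients to $s(n,r)$ in Proposition \ref{P4}. Consequently the theorem is a one-line evaluation and presents no genuine obstacle; the only points worth a moment's care are the bookkeeping that $c = 1$ reproduces the Alexander dual with respect to $\mathbf{n}$ (rather than some other shift), and the elementary collapse of the rising factorial $1^{\bar{r}}$ to $r!$.
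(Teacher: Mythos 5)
Your proposal is correct and follows exactly the paper's own route: the paper likewise proves Theorem \ref{T1} by observing $a_n = a_n(1)$ and specializing the rising-factorial identity of Proposition \ref{P4} at $x=1$, where $1^{\bar{r}} = r!$. Your additional remarks on the bookkeeping (that $c=1$ recovers the Alexander dual with respect to $\mathbf{n}$) are accurate but add nothing beyond what the paper's one-line proof already relies on.
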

\begin{proof}
Since $a_n = a_n(1)$, theorem follows from Proposition
\ref{P4}. \hfill $\square$
\end{proof}

Consider the integer sequence (A007840) in OEIS \cite{S}. The
$n$th term $b_n$ of this sequence is the number of factorization
of permutations of $[n]$ into ordered cycles and 
$b_n = \sum_{r=1}^n (r!) ~s(n,r)$. It can be verified that
\[
b_n = {\rm Per}([m_{ij}]_{n \times n}) =
{\rm Per} \begin{bmatrix}
1&1&1&\ldots& 1\\1&2&1&\ldots & 1\\1&1&3&\ldots &1 \\
\vdots& \vdots & \vdots & \ddots & \vdots\\
1&1&1& \ldots & n
\end{bmatrix} ,
\]
where $m_{ii}=i$ and $m_{ij}=1$ for $i \ne j$. We recall that 
{\em permanent} ${\rm Per}([m_{ij}]_{n \times n})$ 
of the matrix $[m_{ij}]_{n \times n}$
is given by
$\sum_{\sigma \in \mathfrak{S}_n} \prod_{i=1}^n m_{i\sigma(i)}$.
There are many combinatorial interpretation of the integer 
sequence (A007840). Theorem \ref{T1} gives a description of
the integer sequence (A007840) in terms of enumeration
of standard monomials of $\frac{R}{I_W^{[\mathbf{n}]}}$, or
equivalently, in terms of the number $|\widetilde{{\rm PF}}_n|$
of restricted parking functions of length $n$.

We now show that enumeration of standard monomials of 
$\frac{R}{I_W^{[\mathbf{n}]}}$ is related to enumeration of
rooted-labelled unimodal forests on $[n]$. The concept of 
permutations avoiding patterns has been extended to 
many combinatorial  objects, such as, trees, graphs and 
posets. Let $F_n$ be the set of (unordered) rooted-labelled
forests on the vertex set $[n]$. Then $|F_n|=(n+1)^{n-1}$. A 
rooted-labelled forest on $[n]$ is said to {\em avoid a
pattern} $\tau \in \mathfrak{S}_r$ if along each path from
a root to a vertex, the sequence of labels do not contain a 
subsequence with the same relative order as in the
patterns $\tau=\tau(1) \tau(2) \ldots \tau(r)$. Let $F_n(\tau)$
be the set of rooted-labelled forests on $[n]$ that avoid 
pattern $\tau$. For example, if $\tau = 21$ is a transposition,
then $F_n(21)$ is the set of rooted-labelled increasing 
forests on $[n]$. In other words, labels on any path from a root
to a vertex for a forest in $F_n(21)$ form an increasing 
sequence. Let $F_n(\tau^{(1)},\ldots,\tau^{(s)})$ be the set of
rooted-labelled forests on $[n]$ that avoid a set 
$\{\tau^{(1)},\ldots,\tau^{(s)}\}$ of patterns. The enumeration
of rooted-labelled forests on $[n]$ that avoid
various patterns are obtained in \cite{AnAr}. In particular,
it is shown  that 
$|F_n(213,312)|=\sum_{r=1}^n (r!) ~s(n,r)$ for
$n \ge 1$. The rooted-labelled forests on $[n]$ avoiding 
$213$ and $312$-patterns are precisely 
the {\em unimodal} forests.
Since $|\widetilde{{\rm PF}}_n|=|F_n(213,312)|$, an 
explicit or algorithmic bijection 
$\phi : \widetilde{{\rm PF}}_n
\longrightarrow F_n(213,312)$ is desired.

Before we end this section, we describe an easy extension of
Theorem \ref{T1}.

Let $b,c \ge 1$ and 
$\mathbf{u}=(u_1,\ldots,u_n) \in \mathbb{N}^n$
with $u_i = u_1+(i-1)b$. We have seen that the standard
monomials of $\frac{R}
{I_{\mathfrak{S}_n}(\mathbf{u})^{[\mathbf{u_n+c-1}]}}$ are
of the form $\mathbf{x}^{\mathbf{p}}$, where $\mathbf{p}
\in {\rm PF}_n(\lambda)$ is a $\lambda$-parking function
of length $n$ and $\lambda_i =u_n -u_i +c = (n-i)b+c$. Then
$|{\rm PF}_n(\lambda)|= c (c+nb)^{n-1}$ (see \cite{PiSt, PoSh}).
Let $|\widetilde{{\rm PF}}_n(\lambda)|= \dim_k\left(
\frac{R}{I_{W}(\mathbf{u})^{[\mathbf{u_n+c-1}]}}\right) =
\widetilde{a_n}(c)$. 
Actually, $\widetilde{a_n}(c)$ depends on $b$ also, but we are 
treating $b$ to be a fixed constant. Also, 
$\widetilde{a_n}(c)$ is a polynomial expression in $c$.
\begin{proposition}
For $n \ge 3, ~b,c \ge 1$, $\widetilde{a_n}(c)$ satisfies
a recurrence relation 
\[
\widetilde{a_n}(c) = ((n-1)b)~\widetilde{a_{n-1}}(c) +
(c) ~\widetilde{a_{n-1}}(c+b).
\]
\label{P5}
\end{proposition}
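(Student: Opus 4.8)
The plan is to repeat, almost verbatim, the chain-cancellation argument that established Proposition \ref{P3}, keeping careful track of how the common difference $b$ enters the weights $\mu_{j,T}^{\mathbf{u}}$. Since $I_W(\mathbf{u})^{[\mathbf{u_n+c-1}]}$ is again an order monomial ideal whose minimal resolution is the cellular resolution supported on $\Delta(\Sigma_n)$, the analogue of formula (\ref{EQ2}) gives
\[
\widetilde{a_n}(c) = \sum_{\mathcal{C} \in \mathfrak{Ch}([n])} (-1)^{n-\ell(\mathcal{C})-1}\,\mu^{\mathbf{u}}(\mathcal{C}),
\]
and, as in the proof of Proposition \ref{P3}, I would write $\mu^c(\mathcal{C})$ to record the dependence on $c$ with $b$ held fixed. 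The key observation is that for any $T \ni n$ with $|T| \ge 2$ one has $\mu_{n,T}^{\mathbf{u}} = u_n - u_n + c = c$, independently of $b$. Hence the cancellation of Lemma \ref{L1} goes through unchanged, the alternating sum may be restricted to the same set $\mathfrak{Ch}'[n]$ of surviving chains, and this set again splits into Type-I, Type-II and Type-III chains with their bijections onto $\mathfrak{Ch}[n-1]$; these bijections, together with the accompanying length and sign relations, are purely structural and are inherited without change.

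First I would treat Type-I, where $A_1 = \{n\}$. The distinguished factor is now $\mu_{n,\{n\}}^{\mathbf{u}} = u_n - u_1 + c = (n-1)b + c$. After deleting $A_1$ and relabelling, every remaining factor $\mu_{j,A_q}^{\mathbf{u}}$ (with $A_q \ni n$) is recomputed in $[n-1]$ with top $u_{n-1}$; because $u_n - u_{t+j-i} = u_{n-1} - u_{(t-1)+j-i}$ for an arithmetic progression of difference $b$, each such factor is unchanged. Thus the Type-I contribution is $\big((n-1)b + c\big)\widetilde{a_{n-1}}(c)$. For Type-III, where $n \in A_1$ and $|A_1| \ge 2$, the distinguished factor is $\mu_{n,A_1}^{\mathbf{u}} = c$, the remaining factors again match after deleting $n$, and the parity of $\ell(\mathcal{C})$ is preserved by $\mathcal{C} \mapsto \mathcal{C}\setminus\{n\}$, so the sign flips and the contribution is $-c\,\widetilde{a_{n-1}}(c)$.

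The only place where $b$ genuinely intervenes, and hence the main point requiring care, is Type-II, where $A_{i-1} = [n-1]$. The top step contributes $\mu_{n,[n]}^{\mathbf{u}} = c$, and restricting to $[n-1]$ replaces the top index $u_n$ by $u_{n-1}$ in every surviving factor. Since each such factor has the shape $u_n - u_s + c$, this decreases its value by $u_n - u_{n-1} = b$; the decrease is exactly compensated by passing from parameter $c$ to $c+b$, because $u_n - u_s + c = u_{n-1} - u_s + (c+b)$. Consequently the Type-II contribution is $c\,\widetilde{a_{n-1}}(c+b)$, the shift being $c \mapsto c+b$ rather than the $c \mapsto c+1$ of Proposition \ref{P3}. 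Adding the three contributions gives
\[
\widetilde{a_n}(c) = \big((n-1)b + c\big)\widetilde{a_{n-1}}(c) + c\,\widetilde{a_{n-1}}(c+b) - c\,\widetilde{a_{n-1}}(c) = (n-1)b\,\widetilde{a_{n-1}}(c) + c\,\widetilde{a_{n-1}}(c+b),
\]
which is the asserted recurrence.

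The hard part, such as it is, is thus localized: it is simply the verification of the per-factor identities $u_n - u_{t+j-i} = u_{n-1} - u_{(t-1)+j-i}$ for Types I and III, and $u_n - u_s + c = u_{n-1} - u_s + (c+b)$ for Type II, each of which is an immediate consequence of $u_i = u_1 + (i-1)b$. Everything else — the validity of (\ref{EQ2}) in this setting, the cancellation Lemma \ref{L1}, the reduction to $\mathfrak{Ch}'[n]$, the partition into three types, and the length and sign bookkeeping — is inherited unchanged from the proof of Proposition \ref{P3}, since none of it depends on the particular values of $b$ or $c$.
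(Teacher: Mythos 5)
Your proof is correct and takes essentially the same approach as the paper: the paper's own proof just invokes equation (\ref{EQ2}), observes that Lemma \ref{L1} still holds for arithmetic $\mathbf{u}$, and says ``proceed as in the proof of Proposition \ref{P3}.'' Your write-up carries out exactly that plan, filling in the details the paper leaves implicit --- namely that the Type-I factor becomes $(n-1)b+c$, the Type-II restriction shifts the parameter by $c \mapsto c+b$, and the Type-III contribution stays $-c\,\widetilde{a_{n-1}}(c)$, all hinging on the per-factor identities for $u_i = u_1+(i-1)b$ that you verify.
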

\begin{proof}
From equation (\ref{EQ2}), we have
\[ \widetilde{a_n}(c) =
\dim_k\left(\frac{R}{I_W(\mathbf{u})^{[\mathbf{u_n+c-1}]}}
\right)=
\sum_{\mathcal{C} \in \mathfrak{Ch}([n])} 
(-1)^{n - \ell(\mathcal{C}) - 1} ~ \mu^{\mathbf{u}}(\mathcal{C}), 
\]
where $u_i = u_1+(i-1)b$. For such $\mathbf{u}$, Lemma \ref{L1} holds. Thus 
\[
\widetilde{a_n}(c) = \sum_{\mathcal{C} \in \mathfrak{Ch}([n])} 
(-1)^{n - \ell(\mathcal{C}) - 1}  ~\mu^{\mathbf{u}}(\mathcal{C})
=\sum_{\mathcal{C} \in \mathfrak{Ch}'([n])} 
(-1)^{n - \ell(\mathcal{C}) - 1}  ~\mu^{\mathbf{u}}(\mathcal{C}) . 
\]
Now proceed as in the proof of Proposition \ref{P3}.
 \hfill $\square$
\end{proof}
Replacing $c$ with an indeterminate $x$, we consider polynomial
$\widetilde{a_n}(x)$. Thus there is a polynomial identity
\begin{equation}
\widetilde{a_n}(x) = ((n-1)b)~ \widetilde{a_{n-1}}(x) 
+ x~ \widetilde{a_{n-1}}(x+b) 
\quad {\rm for}~~n \ge 3.
\label{EQ4}
\end{equation} 
Since $\widetilde{a_1}(x) = x$ and 
$\widetilde{a_2}(x) = x^2+2bx$, on setting 
$\widetilde{a_0}(x) =1$,
the recurrence relation (\ref{EQ4}) is valid for $n \ge 1$. Again, 
we have  $\widetilde{a_n}(0) = 0$ for $n \ge 1$.
\begin{theorem}
For $n \ge 1$, $\widetilde{a_n}(x) = \sum_{r=1}^n
(b^{n-r} ~s(n,r))~x(x+b)\cdots(x+(r-1)b)$. In particular, for
$\lambda=(\lambda_1, \ldots, \lambda_n)$ with
$\lambda_i=(n-i)b+c$
\[
|\widetilde{{\rm PF}}_n(\lambda)|
= \widetilde{a_n}(c) = b^n ~\sum_{r=1}^n s(n,r)~
\frac{\Gamma(\frac{c}{b}+r)}{\Gamma(\frac{c}{b})},
\]
where $\Gamma$ is the gamma function, i.e., $\Gamma(x+1)=
x ~\Gamma(x)$ for $x > 0$ and $\Gamma(1)=1$.
\label{T2}
\end{theorem}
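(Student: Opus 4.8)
The plan is to mimic the proof of Proposition \ref{P4}, replacing the ordinary rising powers by their $b$-scaled analogues. I would introduce the generalized rising factorial $x^{\overline{r},b} = x(x+b)(x+2b)\cdots(x+(r-1)b)$, with $x^{\overline{0},b}=1$; since these polynomials have distinct degrees $0,1,2,\ldots$, the set $\{x^{\overline{r},b} : r \ge 0\}$ is a $\mathbb{Q}$-basis of $\mathbb{Q}[x]$. Because $\widetilde{a_n}(0)=0$ for $n \ge 1$, the constant term vanishes and we may write $\widetilde{a_n}(x) = \sum_{r=1}^n \beta_n(r)\, x^{\overline{r},b}$ for uniquely determined coefficients $\beta_n(r)$, adopting the convention $\beta_n(0)=0$. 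The goal then reduces to showing $\beta_n(r) = b^{n-r}\, s(n,r)$.

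First I would record the single algebraic identity that drives everything: since $(x+b)^{\overline{r},b} = (x+b)(x+2b)\cdots(x+rb)$, multiplication by $x$ gives $x\,(x+b)^{\overline{r},b} = x^{\overline{r+1},b}$. Substituting the basis expansion of $\widetilde{a_{n-1}}$ into the recurrence (\ref{EQ4}), the first term $((n-1)b)\,\widetilde{a_{n-1}}(x)$ contributes $((n-1)b)\sum_r \beta_{n-1}(r)\,x^{\overline{r},b}$, while the second term $x\,\widetilde{a_{n-1}}(x+b)$ becomes $\sum_r \beta_{n-1}(r)\, x\,(x+b)^{\overline{r},b} = \sum_r \beta_{n-1}(r)\, x^{\overline{r+1},b}$. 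Reindexing the latter sum and comparing coefficients of $x^{\overline{r},b}$ yields the recurrence
\[
\beta_n(r) = ((n-1)b)\,\beta_{n-1}(r) + \beta_{n-1}(r-1) \quad (n \ge 3).
\]

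Next I would check this recurrence against the initial data. From $\widetilde{a_1}(x)=x$ and $\widetilde{a_2}(x)=x^2+2bx = x^{\overline{2},b}+b\,x^{\overline{1},b}$ one reads off $\beta_1(1)=1$, $\beta_2(2)=1$, $\beta_2(1)=b$, all matching $b^{n-r}s(n,r)$. Substituting the ansatz $\beta_n(r)=b^{n-r}s(n,r)$ into the displayed recurrence, the common factor $b^{n-r}$ pulls out of both terms on the right, reducing the relation exactly to the signless Stirling recurrence $s(n,r)=(n-1)s(n-1,r)+s(n-1,r-1)$ (see \cite{St}). By induction $\beta_n(r)=b^{n-r}s(n,r)$, which gives the first formula. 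The point that most needs care is precisely this bookkeeping of powers of $b$: the $(n-1)b$ in front of $\widetilde{a_{n-1}}$ and the index shift $r \mapsto r-1$ in the other term must combine so that a single factor $b^{n-r}$ survives, and it is this matching that forces the exponent $n-r$ rather than some other power.

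Finally, for the ``in particular'' statement I would set $x=c$ and use $\Gamma(z+1)=z\,\Gamma(z)$ to write $c(c+b)\cdots(c+(r-1)b) = b^r\,\tfrac{c}{b}(\tfrac{c}{b}+1)\cdots(\tfrac{c}{b}+r-1) = b^r\,\Gamma(\tfrac{c}{b}+r)/\Gamma(\tfrac{c}{b})$. Substituting into $\widetilde{a_n}(c)=\sum_{r=1}^n b^{n-r}s(n,r)\,c(c+b)\cdots(c+(r-1)b)$, the factors $b^{n-r}$ and $b^r$ merge into $b^n$, and the claimed Gamma-function expression drops out immediately.
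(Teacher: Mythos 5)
Your proof is correct and takes essentially the same route as the paper: both expand $\widetilde{a_n}(x)$ in the basis $x(x+b)\cdots(x+(r-1)b)$, use the recurrence (\ref{EQ4}) to get the coefficient recurrence $\beta_n(r) = (n-1)b\,\beta_{n-1}(r) + \beta_{n-1}(r-1)$, and identify the coefficients as $b^{n-r}s(n,r)$ from the signless Stirling recurrence and initial conditions. Your write-up merely makes explicit the steps the paper leaves to the reader, namely the identity $x\,(x+b)(x+2b)\cdots(x+rb) = x^{\overline{r+1},b}$ behind the coefficient comparison and the final conversion $c(c+b)\cdots(c+(r-1)b) = b^r\,\Gamma(\tfrac{c}{b}+r)/\Gamma(\tfrac{c}{b})$.
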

\begin{proof}
As in the proof of Theorem \ref{T1}, let
\[\widetilde{a_n}(x) = \sum_{r=1}^n \widetilde{\alpha_n}(r)~
x(x+b)\cdots(x+(r-1)b).
\]
Then from recurrence relation (\ref{EQ4}), 
$\widetilde{\alpha_n}(r)$ satisfies the recurrence relation
\[\widetilde{\alpha_n}(r) = (n-1)b~\widetilde{\alpha_{n-1}}(r)
+ \widetilde{\alpha_{n-1}}(r-1); \quad {\rm for}~1 \le r\le n,
\]
with initial conditions $\widetilde{\alpha_0}(1)=0$ and
$\widetilde{\alpha_1}(1)=1$. It is straight forward to see that
$\widetilde{\alpha_n}(r) = b^{n-r}~s(n,r)$. \hfill $\square$
\end{proof}

\section{Some other cases}
The Betti numbers and enumeration of standard monomials of
the Artinian  quotient $\frac{R}{I_S^{[\mathbf{n}]}}$ for
$ S = \mathfrak{S}_n(132,231), \mathfrak{S}_n(123,132)$ and
$\mathfrak{S}_n(123,132,213)$ are give in \cite{AC2,AC3}. In this
section, the monomial ideal $I_S$ and its Alexander dual
$I_S^{[\mathbf{n}]}$ are studied for various other 
subsets $S \subseteq \mathfrak{S}_n$ consisting of permutations
avoiding patterns. For clarity of presentation, we divide these
subsets into three cases.
\begin{enumerate}
\item[Case 1.] \quad $S_1 = \mathfrak{S}_n(123,132,312),
S_2= \mathfrak{S}_n(123,213,231),
S_3 = \mathfrak{S}_n(132,213,231)$.
\item[Case 2.] \quad $T_1 = \mathfrak{S}_n(123,132,231),
T_2 = \mathfrak{S}_n(213,312,321)$.
\item[Case 3.] \quad $U = \mathfrak{S}_n(123,231,312)$.
\end{enumerate}

We have, $|S_a|=|T_b|=|U|=n$ for $1\le a \le 3$ and 
$1\le b \le 2$ (see \cite{SiSc}).
\begin{lemma}
The minimal generators of the Alexander dual
$I_S^{[\mathbf{n}]}$ for $S = S_a,T_b$ or $U$ are given
as follows.
\begin{enumerate}
\item[{\rm (i)}] $I_{S_1}^{[\mathbf{n}]} = \left\langle 
x_{\ell}^{\ell+1},~ x_i^i \left( \prod_{j > i}^n x_j \right)
 : 1 \le \ell \le n-1;~1 \le i \le n \right\rangle$.
\item[{\rm (ii)}] $I_{S_2}^{[\mathbf{n}]} = \left\langle 
x_{\ell}^{n}, ~x_i^i  x_j^{j-1} 
 : 1 \le \ell \le n;~1 \le i < j \le n \right\rangle$.
\item[{\rm (iii)}] $I_{S_3}^{[\mathbf{n}]} = \left\langle 
x_{\ell}^{n}, ~x_i^i  x_j^{n-(j-i)} 
 : 1 \le \ell \le n;~1 \le i < j \le n \right\rangle$.
\item[{\rm (iv)}] $I_{T_1}^{[\mathbf{n}]} = \left\langle 
x_{\ell}^{\ell+1},~ x_n^n, ~ x_i^i  x_n^i 
 : 1 \le \ell \le n-1;~1 \le i < n \right\rangle$.
\item[{\rm (v)}] $I_{T_2}^{[\mathbf{n}]} = \left\langle 
x_{\ell}^{n-\ell+1},~ x_n^n, ~x_i^{n-i}  x_n^{n-i} 
 : 1 \le \ell \le n-1;~1 \le i < n \right\rangle$.
\item[{\rm (vi)}] $I_{U}^{[\mathbf{n}]} = \left\langle 
 \prod_{j \in A } x_j^{\nu_{j,A}} 
 : A =\{j_1,\ldots,j_t\} \in \Sigma_n \right\rangle$, where
 $\nu_{j_1,A} = n -(j_{|A|}-j_1)$ and $\nu_{j_i,A}= j_i - j_{i-1}$
 for $i \ge 2$, provided $j_1 < j_2 <\ldots < j_t$.
\end{enumerate}
\label{L2}
\end{lemma}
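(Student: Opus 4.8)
The plan is to realize each Alexander dual as an intersection of irreducible monomial ideals and then read off its minimal generators combinatorially. Every $\sigma \in \mathfrak{S}_n$ contributes the generator $\mathbf{x}^{\sigma} = \prod_{j=1}^n x_j^{\sigma(j)}$ of $I_S$, and all its exponents lie between $1$ and $n$; hence Alexander duality with respect to $\mathbf{n}$ gives $I_S^{[\mathbf{n}]} = \bigcap_{\sigma \in S}\mathfrak{m}_{\sigma}$, where $\mathfrak{m}_{\sigma} = \langle x_j^{\,n+1-\sigma(j)} : 1 \le j \le n\rangle$. A monomial $\mathbf{x}^{\mathbf{c}}$ therefore lies in $I_S^{[\mathbf{n}]}$ precisely when, for every $\sigma \in S$, some coordinate $j$ satisfies $c_j + \sigma(j) \ge n+1$; for $S = \mathfrak{S}_n$ this recovers the tree ideal $\mathcal{M}_{K_{n+1}}$ by pigeonhole, a useful consistency check, and the argument parallels the hypercubic computation of Theorem 3.3 of \cite{AC1}. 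The lemma thus splits into two tasks: (A) list the permutations in each $S$ explicitly, and (B) determine the minimal monomials meeting this covering condition.

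For (A) I would use that avoiding a suitable pair of length-$3$ patterns already pins down $\sigma(1)$: avoiding $\{213,231\}$ forces $\sigma(1)\in\{1,n\}$, and avoiding $\{123,132\}$ forces $\sigma(1)\in\{n-1,n\}$. In each of $S_1,S_2,S_3,T_1,T_2$ the third forbidden pattern then collapses the remaining entries into a monotone block, and peeling off the largest value yields a recursion proving $|S|=n$ and exhibiting $S$ as a one-parameter family $\{\sigma_m : 1 \le m \le n\}$ (for example $S_1$ is obtained by inserting the value $n$ into each slot of the decreasing word $(n-1,\ldots,1)$, and $T_2$ is the image of $T_1$ under complementation $\sigma \mapsto \sigma^{c}$). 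The set $U = \mathfrak{S}_n(123,231,312)$ is different: it is the set of $n$ cyclic rotations in value of the decreasing permutation, $\sigma_a(j) = ((n-j+a)\bmod n)+1$ for $0 \le a \le n-1$. Once $S$ is listed, the exponents $n+1-\sigma(j)$ of each $\mathfrak{m}_{\sigma}$ are immediate.

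For (B) in cases (i)--(v) I would argue in three steps. Membership: substituting each proposed generator into the covering criterion and letting $m$ run over $1,\ldots,n$, one checks that some coordinate always works; the pure-power generator in $x_\ell$ carries the exponent $n+1-\min_m \sigma_m(\ell)$ read off from the explicit family, while each mixed generator $x_i^{a}x_j^{b}$ with $i<j$ is calibrated so that position $i$ covers the levels where it dominates and position $j$ takes over for the rest. Minimality: the listed monomials are pairwise non-dividing, so they must be the minimal generating set once membership and generation hold. Generation: given $\mathbf{x}^{\mathbf{c}}$ in the intersection and divisible by no pure power, take the largest index $i$ with $c_i$ above the relevant threshold; membership in the single ideal $\mathfrak{m}_{\sigma_m}$ whose pivot lies at any $j>i$ would fail if $c_j=0$, so every tail coordinate is positive and the corresponding mixed generator divides $\mathbf{x}^{\mathbf{c}}$.

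I expect case (vi) to be the main obstacle. Here $I_U^{[\mathbf{n}]}$ has one generator for every $A\in\Sigma_n$, each of degree exactly $n$ because the cyclic gaps $\nu_{j,A}$ sum to $n$. Rewriting the criterion cyclically, $\mathbf{x}^{\mathbf{c}}\in I_U^{[\mathbf{n}]}$ if and only if the arcs $B_j=\{j-c_j,\ldots,j-1\}\pmod n$ of length $c_j$ cover $\mathbb{Z}/n\mathbb{Z}$, and the generator $m_A=\prod_{j\in A}x_j^{\nu_{j,A}}$ corresponds to the partition of the cycle into the arcs $\{j_{i-1},\ldots,j_i-1\}$. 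Membership of each $m_A$ is then transparent, since these arcs partition the cycle and so every rotation $\sigma_a$ is covered by exactly one element of $A$; minimality is automatic from the common degree $n$, because a degree-$n$ monomial cannot properly divide another and $A\mapsto m_A$ is injective with support $A$. The crux is generation: one must extract from any covering family $\{B_j\}$ a sub-collection whose canonical arcs tile $\mathbb{Z}/n\mathbb{Z}$ and sit inside the matching $B_j$, which then produces a subset $A$ with $m_A\mid\mathbf{x}^{\mathbf{c}}$. The naive choice $A=\mathrm{supp}(\mathbf{c})$ fails --- for $n=4$ the monomial $x_1x_2^3x_3$ lies in $I_U^{[\mathbf{n}]}$, yet $m_{\mathrm{supp}}=m_{\{1,2,3\}}=x_1^2x_2x_3$ does not divide it (whereas $m_{\{2,3\}}=x_2^3x_3$ does) --- so $A$ must be selected by a greedy interval-covering argument, and making this precise is the heart of the proof.
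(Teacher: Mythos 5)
Your route is genuinely different from the paper's: you work from the intersection presentation $I_S^{[\mathbf{n}]}=\bigcap_{\sigma\in S}\bigl\langle x_j^{\,n+1-\sigma(j)} : j\in[n]\bigr\rangle$ and a covering criterion, whereas the paper invokes Proposition 5.23 of \cite{MS} --- the minimal generators of $I_S^{[\mathbf{n}]}$ are exactly the monomials $\mathbf{x}^{\mathbf{n}-\mathbf{b}}$ with $\mathbf{b}\le\mathbf{n}$ maximal such that $\mathbf{x}^{\mathbf{b}}\notin I_S$ --- and, for each claimed generator, exhibits the corresponding maximal standard exponent vector, checking non-membership by pattern arguments (done in detail for $S_1$ and $U$). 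Your step (A) is sound: the families are as you describe them (in particular $U$ consists of the $n$ value-rotations of the decreasing permutation, and $T_2$ is the complement of $T_1$), your cyclic-arc reformulation of membership in $I_U^{[\mathbf{n}]}$ is correct, and the largest-index extraction you sketch does close the generation step in cases (i)--(v).

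The genuine gap is exactly where you flag it: in case (vi) you establish membership and minimality of the $m_A$ but not generation, and you end by declaring that making the ``greedy interval-covering argument'' precise ``is the heart of the proof.'' A proof cannot leave its self-identified crux unproven; as written, you have only shown that the $m_A$ lie in $I_U^{[\mathbf{n}]}$ and are pairwise non-dividing, not that every monomial of $I_U^{[\mathbf{n}]}$ is divisible by some $m_A$. The step is completable in a few lines along the direction you hint at: for each residue $a$ covered by the arcs, choose an index $\pi(a)$ with $a\in B_{\pi(a)}$; iterate $\pi$ from any index and pass to the eventual cycle, with element set $A$. Each $j\in A$ contains its cycle-predecessor in $B_j$; since $B_j$ is an arc ending at $j-1$, the nearest cyclic predecessor of $j$ in $A$ also lies in $B_j$, i.e.\ $\nu_{j,A}\le c_j$ for every $j\in A$, so $m_A$ divides $\mathbf{x}^{\mathbf{c}}$ (the fixed-point case $\pi(j)=j$ occurs only when $c_j\ge n$, where $x_j^n=m_{\{j\}}$ already divides). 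With that paragraph added your treatment of (vi) would be complete --- indeed arguably more self-contained than the paper's, which proves non-membership of $\mathbf{x}^{\widehat{\mathbf{b}}_A}$ carefully but leaves the maximality and exhaustiveness checks to the reader. A secondary, lesser issue: your verifications for (ii)--(v) are only described generically (``calibrated so that\dots''); they should be written out per family, as the structure of the thresholds differs case by case, though this part is routine.
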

\begin{proof}
We recall that a vector $\mathbf{b}\in \mathbb{N}^n$
satisfying $\mathbf{b} \le \mathbf{n}$ (i.e., $b_i \le n$)
is maximal with 
$\mathbf{x}^{\mathbf{b}} \notin I_S$ if and only if 
$\mathbf{x}^{\mathbf{n}-\mathbf{b}}$ is a minimal generator
of $I_S^{[\mathbf{n}]}$ (see Proposition 5.23 of \cite{MS}).
Now proceeding as in the proof of Lemma 2.1 and 2.2 of
\cite{AC3}, it is easy to get the minimal generators of
the Alexander duals. We sketch a proof of part (i) and (vi) as 
proof of other parts are on similar lines.
 
For $\ell \in [n-1]$, let $\mathbf{b}_{\ell}=(n,\ldots,
n-{\ell}-1,\ldots,n)$ ($\ell^{th}$ coordinate $n-\ell -1$, elsewhere $n$).
Then $\mathbf{x}^{\mathbf{b}_{\ell}} \notin I_{S_1}$ and this
gives the minimal generator $x_{\ell}^{\ell+1} 
\in I_{S_1}^{[\mathbf{n}]}$. For $i \in [n]$, let
$\mathbf{b}_{i,n}=(n,\ldots,n,n-i,n-1,\ldots,n-1) \in 
\mathbb{N}^n$ (i.e., $i^{th}$ coordinate $n-i$, first $i-1$ coordinates $n$, 
and the last $n-i$ coordinates $n-1$). Again, 
$\mathbf{x}^{\mathbf{b}_{i,n}} \notin I_{S_1}$ and this gives
the minimal generator $x_i^i (x_{i+1}\ldots x_n) \in 
I_{S_1}^{[\mathbf{n}]}$. This proves part (i).

If $A=\{\ell\} \in \Sigma_n$, then taking 
$\widehat{\mathbf{b}}_{\ell}
= (n,\ldots,0,\ldots,n)$ (i.e., $0$ at $\ell^{th}$ place 
and elsewhere $n$), we get the minimal generator $x_{\ell}^n
\in I_U^{[\mathbf{n}]}$. For $A =\{j_1,\ldots, j_t\} \in 
\Sigma_n$ with $t \ge 2$ and $j_1 < \ldots < j_t$, let 
$\widehat{\mathbf{b}}_A = (b_1,\ldots,b_n)$, where
$b_{j_1}=j_t - j_1, ~b_{j_i}=n-(j_i-j_{i-1})$ 
(for $ i \ge 2$) and
$b_r = n $ (for $r \notin A$).\\
Claim : \quad $\mathbf{x}^{\widehat{\mathbf{b}}_A}
\notin I_U$.

Otherwise, there exists a $\sigma \in U$ such that 
$\mathbf{x}^{\sigma}$ divides 
$\mathbf{x}^{\widehat{\mathbf{b}}_A}$. Thus
$\sigma(j_1) \le j_t- j_1$ and $\sigma(j_i) 
\le n-(j_i - j_{i-1})$ for $2 \le i \le t$. We see that 
\[ \sigma(j_1) > \sigma(j_2) > \ldots > \sigma(j_t).
\]
If $\sigma(j_{i-1}) < \sigma(j_i)$ for $1 < i \le t$, then 
$ \sigma(j_{i-1}), \sigma(j_i) \in [n-(j_i-j_{i-1})]$. But
$|[n-(j_i-j_{i-1})]|= n-(j_i-j_{i-1})$ and 
$|[j_{i-1}] \coprod [j_i,n]| = n-(j_i-j_{i-1}) +1$, where
$[a,b] = \{m \in \mathbb{Z} : a \le m \le b\}$ denotes an integer
interval for $a,b \in \mathbb{Z}$. Thus there exists
$\ell \in [n] \setminus [j_{i-1},j_i]$ such that
$\sigma(\ell) \notin [n-(j_i-j_{i-1})]$. This shows that
$\sigma(j_{i-1}) < \sigma(j_i) < \sigma(\ell)$. Hence,
$\sigma$ has a 123 or a 312-pattern, a contradiction to 
$\sigma \in U$. Now $\sigma(j_t) < \sigma(j_1) \le j_t-j_1$
implies that $j_t -j_1 \ge 2$. Again, 
$\sigma(j_1), \sigma(j_t) \in [j_t-j_1]$, but
$|[j_t-j_1]| = j_t-j_1 < |[j_1,j_t]|=j_t-j_1+1$. Thus there
exists $\ell \in [j_1+1,j_t-1]$ such that 
$\sigma(\ell) > j_t -j_1$. This shows that, 
$\sigma(j_t) < \sigma(j_1)
< \sigma(\ell)$ with $j_1 < \ell < j_t$ demonstrating that 
$\sigma$ has a 231-pattern, a contradiction. This proves our 
claim. It can be shown that $\widehat{\mathbf{b}}_A$ has the
desired maximality property and hence $\mathbf{x}^{\mathbf{n}
- \widehat{\mathbf{b}}_A}$ is a minimal generator of 
$I_U^{[\mathbf{n}]}$. \hfill $\square$
\end{proof}

We shall show that all monomial ideals in Lemma \ref{L2} are
order monomial ideals. Let $(P, \preceq)$ be a finite poset
and let $\{\omega_u : u \in P\}$ be a set of monomials
in $R$. The monomial ideal $I = \langle \omega_u : 
u \in P \rangle$ is said to be an 
{\em order monomial ideal}
if for any pair $u,v \in P$, there is
an upper bound $w \in P$ of $u$ and $v$ such that
$\omega_w$ divides the least common multiple 
${\rm LCM}(\omega_u,\omega_v)$ of $\omega_u$ and $\omega_v$. The
order complex
$\Delta(P)$ of a  finite poset $P$ is a simplicial complex, whose
$r$-dimensional faces are chains $u_1 \prec u_2 \prec
\ldots \prec u_{r+1}$ of length $r$ in $P$. If $F$ is a face
of $\Delta(P)$, then monomial label 
$\mathbf{x}^{\alpha(F)}$ (say) on $F$ is the 
${\rm LCM}(\omega_u : u \in F)$. Let 
\[\mathbb{F}_{*}(\Delta(P)) : \cdots \rightarrow \mathbb{F}_i \rightarrow
\mathbb{F}_{i-1} \rightarrow \cdots \rightarrow \mathbb{F}_1
\rightarrow \mathbb{F}_0 \rightarrow 0
\]
be the free $R$-complex associated to the (labelled) simplicial
complex $\Delta(P)$. If $\mathbb{F}_{*}(\Delta(P))$ is 
exact at $\mathbb{F}_i$ for $i \ge 1$, then we say that 
$\mathbb{F}_{*}(\Delta(P))$ is a 
{\em cellular resolution} of $I$
supported on $\Delta(P)$ (see \cite{BPS, BS, MS}).

It is convenient to study the monomial ideal $I_S$ in Lemma \ref{L2} 
according to the three cases already described. \\
{\sc Case-1.} To each monomial ideal $I_{S_a}^{[\mathbf{n}]}$, we 
associate a poset $\Sigma_n(S_a)$ (for $1 \le a \le 3$) as follows.
\begin{enumerate}
\item[(i)] Let $\Sigma_n(S_1) = \{\{\ell\}: 1 \le \ell \le n-1\}
\cup \{[i,n]: 1 \le i \le n \}$, where
$[i,n]=\{a \in \mathbb{N}: i \le a \le n\}$ and $[n,n]=\{n\}$.
We define a poset structure on $\Sigma_n(S_1)$ by describing
cover relations. For $\ell, \ell' \in [n-1]$ and $i,i' \in [n]$,
$\{\ell\}$ covers $\{\ell'\}$ (or $[i',n]$), if $\ell'=\ell+1$
(respectively, $i'=\ell+2$). Also, $[i,n]$ covers $\{\ell'\}$
(or $[i',n]$) if $i=\ell'$ (respectively, $i'=i+1$). 
The monomial labels $\omega_{\{\ell\}}=x_{\ell}^{\ell+1}$ and 
$\omega_{[i,n]}= x_i^i x_{i+1}\ldots x_n$. Set $\mu_{j,C}^{1}$
 for $C \in \Sigma_n(S_1)$ so that 
 $\omega_C = \prod_{j \in C} x_j^{\mu_{j,C}^{1}}$.
The finite
poset $\Sigma_n(S_1)$ appeared in \cite{AC3}.
\item[(ii)] Let $\Sigma_n(S_2) = \{\{\ell\}: 1 \le \ell \le n\}
\cup \{\{i,j\}: 1 \le i < j \le n \}$.
A poset structure on $\Sigma_n(S_2)$ is given by the following 
cover relations. For $i,j,i',j' \in [n]$ with $i < j$ and 
$i' < j'$,
$\{i,j\}$ covers $\{i',j'\}$, if either
($i=i'$ and $j'=j+1$) or ($j=i'$ and $j'=j+1$). Also,
$\{i,j\}$ covers $\{i'\}$
if either ($i=i'$ and $j=n$) or ($i'=j=n$).
In this case, the monomial labels $\omega_{\{\ell\}}=x_{\ell}^{n}$ and 
$\omega_{\{i,j\}}= x_i^i x_{j}^{j-1}$. Set $\mu_{j,C}^{2}$
 for $C \in \Sigma_n(S_2)$ so that 
 $\omega_C = \prod_{j \in C} x_j^{\mu_{j,C}^{2}}$.
\item[(iii)] Let $\Sigma_n(S_3) = \{\{\ell\}: 1 \le \ell \le n\}
\cup \{\{i,j\}: 1 \le i < j \le n \}$.
Again, a poset structure on $\Sigma_n(S_3)$ is given by the following 
cover relations. For $i,j,i',j' \in [n]$ with $i < j$ and 
$i' < j'$,
$\{i,j\}$ covers $\{i',j'\}$, if either
($i=i'$ and $j=j'+1$) or ($i=i'-1$ and $j'=j$). Also,
$\{i,j\}$ covers $\{i'\}$
if either ($i=i'$ and $j=i+1$) or ($j=i'$ and $j=i+1$).
Again, the monomial labels 
$\omega_{\{\ell\}}=x_{\ell}^{n}$ and 
$\omega_{\{i,j\}}= x_i^i x_{j}^{n-(j-i)}$. 
Set $\mu_{j,C}^{3}$
 for $C \in \Sigma_n(S_3)$ so that 
 $\omega_C = \prod_{j \in C} x_j^{\mu_{j,C}^{3}}$.

\end{enumerate}

The Hasse diagrams of 
$\Sigma_4(S_1), ~ \Sigma_4(S_2),~\Sigma_4(S_3)$ are given in {\sc Figure-2}.

\begin{figure}[!hbpt]
{\begin{tikzpicture}
\node (max) at (-1,2) {$1234$};
\node (a) at (-2,1) {$1$};
\node (c) at (0,1) {$234$};
\node (d) at (-2,0) {$2$};
\node (g) at (-2,-1) {$3$};
\node (b) at (0,-1) {$4$}; 
\node (f) at (0,0) {$34$};
\node (h) at (-1,-2) {$\Sigma_4(S_1)$};
\draw (d) -- (a) -- (max)(c) -- (max)(f) -- (c)(d) -- (g)(f) -- (b) -- (d) -- (c)(g) -- (f) -- (a);
\end{tikzpicture}}
\hspace{2.1em}
{\begin{tikzpicture}
\node (max) at (6,2) {$12$};
\node (a) at (5,1) {$13$};
\node (b) at (4,0) {$14$};
\node (c) at (3,-1) {$1$};
\node (d) at (7,1) {$23$};
\node (e) at (8,0) {$34$};
\node (f) at (9,-1) {$4$};
\node (g) at (6,0) {$24$};
\node (h) at (5,-1) {$2$};
\node (i) at (7,-1) {$3$};
\node (m) at (6,-2) {$\Sigma_4(S_2)$};
\draw (c) -- (b) -- (a) -- (max) -- (d) -- (e) -- (f) -- (g) -- (d)(h) -- (g)(i) -- (e)(f) -- (b)(e) -- (a);
\end{tikzpicture}}
\hspace{1.1em}
{\begin{tikzpicture}
\node (max) at (14,2) {$14$};
\node (a) at (13,1) {$13$};
\node (b) at (12,0) {$12$};
\node (c) at (11,-1) {$1$};
\node (d) at (15,1) {$24$};
\node (e) at (16,0) {$34$};
\node (f) at (17,-1) {$4$};
\node (g) at (14,0) {$23$};
\node (h) at (13,-1) {$2$};
\node (i) at (15,-1) {$3$};
\node (m) at (14,-2) {$\Sigma_4(S_3)$};
\draw (c) -- (b) -- (a) -- (max) -- (d) -- (e) -- (f)(i) -- (e)(i) -- (g) -- (a)(h) -- (g)(h) -- (b)(g) -- (d);
\end{tikzpicture}}
\caption{}
\end{figure}

\begin{proposition}
{\rm (i)}. ~The ideal $I_{S_a}^{[\mathbf{n}]}$ is an order
monomial ideal for $1 \le a \le 3$. \\
{\rm (ii)}.~The free complex 
$\mathbb{F}_{*}(\Delta(\Sigma_n(S_a)))$ is the cellular 
resolution of $I_{S_a}^{[\mathbf{n}]}$ supported on the 
order complex $\Delta(\Sigma_n(S_a))$ for $1 \le a \le 3$.
\label{P6}
\end{proposition}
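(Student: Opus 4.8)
The plan is to deduce both parts from the general theory of order monomial ideals and cellular resolutions developed in \cite{PoSh} (see also \cite{BPS, BS, MS}), so that the only genuine work lies in the combinatorial verification of part (i). By Lemma \ref{L2} together with the definitions of the posets $\Sigma_n(S_a)$, the monomials $\{\omega_C : C \in \Sigma_n(S_a)\}$ are precisely the minimal generators of $I_{S_a}^{[\mathbf{n}]}$, indexed bijectively by the poset elements. Hence part (i) amounts to checking the defining condition of an order monomial ideal, and part (ii) then follows formally once (i) is in hand.

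For part (i), I would fix $a \in \{1,2,3\}$ and take an arbitrary pair $u, v \in \Sigma_n(S_a)$. Each of the three posets has a unique maximal element, so $u$ and $v$ always admit a common upper bound; the point is to exhibit one, say $w$, for which $\omega_w$ divides ${\rm LCM}(\omega_u,\omega_v)$. I would organize the verification by the types of $u$ and $v$ --- singleton versus larger set --- and, within each type, by the way their supports overlap, in each case reading off the least common upper bound from the cover relations and then comparing exponents one variable at a time. For $S_1$, for instance, two singletons are comparable, so $w$ is the larger of the two; for a singleton and an interval, $w$ is the smallest interval $[i,n]$ lying above both, and the divisibility $\omega_{[i,n]} = x_i^i x_{i+1}\cdots x_n \mid {\rm LCM}(\omega_u,\omega_v)$ is immediate from the exponents. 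The identical scheme handles $S_2$ and $S_3$, where the non-singleton elements are the pairs $\{i,j\}$ with labels $x_i^i x_j^{j-1}$ and $x_i^i x_j^{n-(j-i)}$ respectively.

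With the order monomial property established, part (ii) is a formal consequence of the Bayer--Sturmfels acyclicity criterion. Fix a monomial $\mathbf{m}$; the faces of $\Delta(\Sigma_n(S_a))$ whose ${\rm LCM}$-label divides $\mathbf{m}$ are exactly the chains contained in the subposet $P_{\mathbf{m}} = \{\,u : \omega_u \mid \mathbf{m}\,\}$. If $\omega_u, \omega_v \mid \mathbf{m}$ then ${\rm LCM}(\omega_u,\omega_v) \mid \mathbf{m}$, so the upper bound $w$ supplied by part (i) again lies in $P_{\mathbf{m}}$; thus $P_{\mathbf{m}}$ is upward directed and, being finite, has a maximum element. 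Its order complex is therefore a cone over that element, hence contractible and in particular acyclic. By Bayer--Sturmfels this forces $\mathbb{F}_*(\Delta(\Sigma_n(S_a)))$ to be exact at $\mathbb{F}_i$ for all $i \ge 1$, which is exactly the assertion that it is a cellular resolution of $I_{S_a}^{[\mathbf{n}]}$ in the sense defined above. I would not claim minimality here, since the resolution need not be minimal.

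The main obstacle is the exponent bookkeeping in part (i) for $S_2$ and $S_3$. Because the exponents $j-1$ and $n-(j-i)$ depend on the indices, the divisibility $\omega_w \mid {\rm LCM}(\omega_u,\omega_v)$ has to be verified separately according to whether the two pairs share their smaller index, share their larger index, are nested, or are disjoint, and in each configuration one must confirm both that the join dictated by the cover relations actually exists in $\Sigma_n(S_a)$ and that its label divides the ${\rm LCM}$. This is precisely where a wrong choice of $w$, or an off-by-one in the index-dependent exponents, would derail the argument; everything else is routine.
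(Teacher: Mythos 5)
Your proposal is correct and takes essentially the same route as the paper: part (i) is the same direct verification of the order-monomial condition on the posets $\Sigma_n(S_a)$ (which the paper records as a straightforward check), and part (ii) rests on the general fact that the order complex of the underlying poset supports a cellular resolution of any order monomial ideal. The only difference is that the paper simply cites this fact from Postnikov--Shapiro (Theorem 2.4 of \cite{AC3}), whereas you reprove it via directedness of $P_{\mathbf{m}}$, the existence of a maximum element, and the Bayer--Sturmfels acyclicity criterion---a correct, self-contained unpacking of the cited result.
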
 
\begin{proof}
Given the poset structure on $\Sigma_n(S_a)$, it is a
straight forward verification that $I_{S_a}^{[\mathbf{n}]}$
is an order monomial ideal. Postnikov and Shapiro \cite{PoSh}
 showed that the free complex
 $\mathbb{F}_{*}(\Delta(P))$ is a
 cellular resolution of the order monomial ideal
 $I = \langle \omega_u : u \in P \rangle$ (see Theorem 2.4 of
 \cite{AC3}). \hfill $\square$
\end{proof}
\begin{remark}
{\rm The cellular resolution 
$\mathbb{F}_{*}(\Delta(\Sigma_n(S_a)))$ is minimal for $a=1$,
but nonminimal for $a=2,3$. Also, the $r^{th}$ Betti number
$\beta_r(I_{S_1}^{[\mathbf{n}]})$ is given by (see Theorem 2.7 of
\cite{AC3})
\[
\beta_r(I_{S_1}^{[\mathbf{n}]}) = \sum_{s=0}^{r+1} 
{n-1 \choose s}{n-s \choose r+1-s}; \quad (0 \le r \le n-1).
\]
}
\end{remark}

We now identify standard monomials of 
$\frac{R}{I_{S_a}^{[\mathbf{n}]}}$. Consider the 
following subsets
of the set ${\rm PF}_n$ of parking functions
$\mathbf{p}=(p_1,\ldots,p_n)$ of length $n$.
\begin{enumerate}
\item[(i)] ${\rm PF}_n^1 = \{ \mathbf{p} \in {\rm PF}_n :
p_t \le t,~\forall t ~{\rm and~if }~p_i=i, 
~{\rm then}~ p_j=0~{\rm for~some}~
j \in [i,n]\}$.
\item[(ii)] ${\rm PF}_n^2 = \{ \mathbf{p} \in {\rm PF}_n :
{\rm if }~p_i \ge i, ~{\rm then}~ p_j < j-1 ~{\rm for~all}~
j \in [i+1,n]\}$.
\item[(iii)] ${\rm PF}_n^3 = \{ \mathbf{p} \in {\rm PF}_n :
{\rm if }~p_i \ge i, ~{\rm then}~ p_j < n-(j-i) ~{\rm for~all}~
j \in [i+1,n]\}$.
\end{enumerate}
In view of Lemma \ref{L2}, $\mathbf{x}^{\mathbf{p}} \notin
I_{S_a}^{[\mathbf{n}]}$ if and only if $\mathbf{p} \in
{\rm PF}_n^{a}$ for $1\le a \le 3$. Thus (fine) Hilbert series
$H\left(\frac{R}{I_{S_a}^{[\mathbf{n}]}}, \mathbf{x}\right)$
of $ \frac{R}{I_{S_a}^{[\mathbf{n}]}}$ is given by
$H\left(\frac{R}{I_{S_a}^{[\mathbf{n}]}}, \mathbf{x}\right)
= \sum_{\mathbf{p}\in {\rm PF}_n^{a}} ~\mathbf{x}^{\mathbf{p}}$. In particular, 
$|{\rm PF}_n^{a}| = 
\dim_k\left(\frac{R}{I_{S_a}^{[\mathbf{n}]}} \right)=
H\left(\frac{R}{I_{S_a}^{[\mathbf{n}]}}, \mathbf{1}\right)$, where
$\mathbf{1}=(1,\ldots,1)$. Using the cellular resolution
$\mathbb{F}_{*}(\Delta(\Sigma_n(S_a)))$ supported on the order
complex $\Delta(\Sigma_n(S_a))$, the (fine) Hilber series
$H\left(\frac{R}{I_{S_a}^{[\mathbf{n}]}}, \mathbf{x}\right)$ is
given by
\begin{equation}
H\left(\frac{R}{I_{S_a}^{[\mathbf{n}]}}, \mathbf{x}\right)
= \frac{\sum_{i=0}^n (-1)^i \sum_{(C_1,\ldots,C_i) 
\in {\mathcal{F}_{i-1}^a}} \prod_{q=1}^i \left(
\prod_{j \in C_q \setminus C_{q-1}} x_j^{\mu_{j,C_q}^a} 
\right)}{(1-x_1)\cdots (1-x_n)},
\label{EQ5}
\end{equation}
where $\mathcal{F}_{i-1}^a$ is the set of $i-1$-dimensional
faces of $\Delta(\Sigma_n(S_a))$, $(C_1,\ldots,C_i) \in 
\mathcal{F}_{i-1}^a$ is a (strict) 
chain $C_1 \prec \ldots \prec C_i$
of length $i-1$, $C_0=\emptyset$ and $\mu_{j,C}^a$ is as in
the definition of poset $\Sigma_n(S_a)$.
\begin{proposition}
The number of standard monomials of 
$\frac{R}{I_{S_a}^{[\mathbf{n}]}}$ is given by
\[
\dim_k\left(\frac{R}{I_{S_a}^{[\mathbf{n}]}}\right)
= \sum_{i=1}^n (-1)^{n-i} \sum_{\substack{(C_1,\ldots,C_i)
\in \mathcal{F}_{i-1}^a \\ C_1 \cup \ldots \cup C_i = [n]}}
\prod_{q=1}^i \left(\prod_{j \in C_q \setminus C_{q-1}} 
\mu_{j,C_q}^a \right),
\]
where summation is carried over all $i-1$-dimensional faces
$(C_1,\ldots,C_i) \in \mathcal{F}_{i-1}^a$ of
$\Delta(\Sigma_n(S_a))$ with $C_1 \cup \ldots \cup C_i = [n]$
and $C_0=\emptyset$. Also,
\[
\dim_k\left(\frac{R}{I_{S_a}^{[\mathbf{n}]}}\right)
=   \sum_{\substack{ 0 \le i \le n; \\
(C_1,\ldots,C_i) \in \mathcal{F}_{i-1}^a}} (-1)^{i}
\left( \prod_{q=1}^i (\prod_{j \in C_q \setminus C_{q-1}} 
(\mu_{j,\{j\}}^a-\mu_{j,C_q}^a ))\right) 
\left( \prod_{l \notin C_i} \mu_{l,\{l\}}^a
\right),
\]
where summation is carried over all faces $(C_1,\ldots,C_i) \in \mathcal{F}_{i-1}^a$ 
including the empty face 
$C_0=\emptyset$.
\label{P7}
\end{proposition}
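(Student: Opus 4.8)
The plan is to read off both identities from the Hilbert--series formula (\ref{EQ5}) by evaluating it at $\mathbf{x}=\mathbf{1}$, where $\dim_k\!\left(R/I_{S_a}^{[\mathbf{n}]}\right)$ equals the total number of standard monomials. Two preliminary observations drive everything. First, each singleton $\{j\}\in\Sigma_n(S_a)$ contributes a pure power $x_j^{\mu_{j,\{j\}}^a}$ to the minimal generators (see Lemma \ref{L2}), so $R/I_{S_a}^{[\mathbf{n}]}$ is Artinian, its Hilbert series is a genuine polynomial, and every standard monomial lies in the finite box $B=\prod_{j=1}^n\{0,1,\dots,m_j-1\}$ with $m_j:=\mu_{j,\{j\}}^a$. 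Second, writing $N(\mathbf{x})=\sum_{i=0}^n(-1)^i\sum_{(C_1,\dots,C_i)\in\mathcal{F}_{i-1}^a}\mathbf{x}^{\beta(F)}$ for the numerator in (\ref{EQ5}), the exponent $\beta(F)_j$ of $x_j$ in the label of a face $F=(C_1\prec\cdots\prec C_i)$ is $\mu_{j,C_q}^a$ for the block $C_q\setminus C_{q-1}$ in which $j$ appears, so the support of $\beta(F)$ is the union $C_1\cup\cdots\cup C_i$ and $|F|=i$.

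For the first identity I would clear the denominator in (\ref{EQ5}) and apply the differential operator $\mathcal{D}=\partial_{x_1}\cdots\partial_{x_n}$, then set $\mathbf{x}=\mathbf{1}$; this is the standard device for extracting a value at the $0/0$ point $\mathbf{x}=\mathbf{1}$. On the left, the generalized Leibniz rule gives $\mathcal{D}\big(H\cdot\prod_j(1-x_j)\big)=\sum_{S\subseteq[n]}(\partial_S H)\,\big(\partial_{[n]\setminus S}\prod_j(1-x_j)\big)$, and since an undifferentiated factor $(1-x_k)$ vanishes at $\mathbf{x}=\mathbf{1}$, only $S=\emptyset$ survives, leaving $(-1)^n H(\mathbf{1})=(-1)^n\dim_k\!\left(R/I_{S_a}^{[\mathbf{n}]}\right)$. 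On the right, $\mathcal{D}\,\mathbf{x}^{\beta(F)}\big|_{\mathbf{1}}$ equals $\prod_{j=1}^n\beta(F)_j$ when $\beta(F)$ has full support and $0$ otherwise, so only faces with $C_1\cup\cdots\cup C_i=[n]$ contribute. Equating the two sides and absorbing $(-1)^n$ into $(-1)^{|F|}=(-1)^i$ yields exactly the first displayed formula.

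For the second identity I would instead pass to the pointwise (Euler--characteristic) form of (\ref{EQ5}). In each multidegree $\mathbf{p}$ the cellular resolution $\mathbb{F}_*(\Delta(\Sigma_n(S_a)))$ is exact except in homological degree $0$, where its homology is $(R/I_{S_a}^{[\mathbf{n}]})_{\mathbf{p}}$; taking Euler characteristics gives the numerical identity $[\mathbf{x}^{\mathbf{p}}\text{ standard}]=\sum_{F}(-1)^{|F|}[\,\mathbf{x}^{\beta(F)}\mid\mathbf{x}^{\mathbf{p}}\,]$, valid for every $\mathbf{p}$ and including the empty face. Summing this over the box $B$ (which contains all standard monomials) and interchanging the two sums, the contribution of a face $F$ with support $\sigma(F)=C_1\cup\cdots\cup C_i$ becomes $(-1)^{|F|}$ times the number of $\mathbf{p}\in B$ whose label is divisible by $\mathbf{x}^{\beta(F)}$, namely $\prod_{j\in\sigma(F)}(m_j-\beta(F)_j)\cdot\prod_{l\notin\sigma(F)}m_l$. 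Since $\beta(F)_j=\mu_{j,C_q}^a$ on the block $C_q\setminus C_{q-1}$, this is precisely $\big(\prod_{q}\prod_{j\in C_q\setminus C_{q-1}}(\mu_{j,\{j\}}^a-\mu_{j,C_q}^a)\big)\prod_{l\notin\sigma(F)}m_l$, the summand in the second formula, with the empty face contributing the box size $\prod_j m_j$.

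The routine but delicate points I expect to absorb the real work are two. First, the passage to $\mathbf{x}=\mathbf{1}$ in (\ref{EQ5}) is genuinely indeterminate, so the operator and Euler--characteristic reformulations must be justified rather than obtained by naive substitution; the Artinian property (pure powers among the generators) is exactly what makes both rigorous and truncates the second sum to the finite box. Second, one must check the bookkeeping identifying $\beta(F)$ with the label $\mathrm{LCM}(\omega_u:u\in F)$: one verifies that along a chain each $j$ receives the exponent $\mu_{j,C_q}^a$ of its relevant block and that the support equals $C_1\cup\cdots\cup C_i$, so that ``$l\notin C_i$'' in the statement is read as $l$ outside this union. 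This is where the explicit cover relations defining $\Sigma_n(S_a)$ enter, and it is the step I would carry out most carefully; once it is in hand, both displayed formulas drop out of the two evaluations above.
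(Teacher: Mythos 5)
Your proposal is correct, and for the second formula it takes a genuinely different (and more self-contained) route than the paper. For the first formula the two arguments coincide in substance: the paper lets $\mathbf{x}\to\mathbf{1}$ in (\ref{EQ5}) ``applying L'Hospital's rule,'' deferring details to Proposition 4.5 of \cite{AC1}, and your operator identity $\partial_{x_1}\cdots\partial_{x_n}\bigl(H\cdot\prod_j(1-x_j)\bigr)\big|_{\mathbf{x}=\mathbf{1}}=(-1)^nH(\mathbf{1})$ is precisely the rigorous form of that iterated limit, legitimate because $H$ is a polynomial (the singleton labels give pure powers, so the quotient is Artinian). For the second formula the paper again argues analytically, following Postnikov--Shapiro \cite{PoSh}: substitute $y_j=1/x_j$ in (\ref{EQ5}), multiply by $\prod_j y_j^{\mu_{j,\{j\}}^a-1}$, and let $\mathbf{y}\to\mathbf{1}$. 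You instead sum the coefficientwise (Euler-characteristic) form of (\ref{EQ5}) over the box $B=\prod_j\{0,1,\ldots,\mu_{j,\{j\}}^a-1\}$ and count, for each face $F$, the lattice points of $B$ lying above its label exponent $\beta(F)$. The two computations are equivalent --- the paper's reciprocal substitution and monomial multiplier is exactly the generating-function encoding of your complementary count in $B$ --- but yours avoids limits altogether and makes the factors $\mu_{j,\{j\}}^a-\mu_{j,C_q}^a$ visible as side-lengths of a sub-box, which is a genuine expository gain. Two points you dismiss as ``bookkeeping'' do need to be stated explicitly to close the argument: first, the count $\#\{\mathbf{p}\in B:\beta(F)\le\mathbf{p}\}=\prod_j(\mu_{j,\{j\}}^a-\beta(F)_j)$ is only valid because $\mu_{j,C}^a\le\mu_{j,\{j\}}^a$ whenever $j\in C$, which holds for all three posets by inspection of the labels attached to $\Sigma_n(S_a)$ (via Lemma \ref{L2}) but could fail for an arbitrary labelled poset, in which case the product could go negative while the true count is zero; second, your reading of ``$l\notin C_i$'' as $l$ outside the union $C_1\cup\cdots\cup C_i$ is indeed the interpretation forced by the derivation, and the distinction matters here, since chains in $\Sigma_n(S_a)$ are chains in an abstract order rather than nested families of sets (for instance $\{4\}\prec\{2\}$ in $\Sigma_4(S_1)$), so the top element $C_i$ need not contain the earlier members of the chain.
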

\begin{proof}
As $|{\rm PF}_n^{a}| = 
\dim_k\left(\frac{R}{I_{S_a}^{[\mathbf{n}]}} \right)=
H\left(\frac{R}{I_{S_a}^{[\mathbf{n}]}}, \mathbf{1}\right)$,
letting $\mathbf{x} \to \mathbf{1}$ in the rational function
expression \ref{EQ5} of 
$H\left(\frac{R}{I_{S_a}^{[\mathbf{n}]}}, \mathbf{x}\right)$, and 
applying L'Hospital's rule, we get the first formula. For more
detail, see the proof of Proposition 4.5 of  \cite{AC1}. In order 
to get the second formula, put $y_j = \frac{1}{x_j}$ in 
(\ref{EQ5}) to get a rational function, say
$\tilde{H}\left(\frac{R}{I_{S_a}^{[\mathbf{n}]}}, \mathbf{y}\right)$. 
Now letting $\mathbf{y}\to \mathbf{1}$ in the
product $\left( \prod_{j=1}^n y_j^{\mu_{j,\{j\}}^a -1} 
\right)
\tilde{H}\left(\frac{R}{I_{S_a}^{[\mathbf{n}]}}, \mathbf{y}\right)$,
 we get the second formula, which
is due to Postnikov and Shapiro \cite{PoSh}.
\end{proof}
\begin{theorem}
The number of standard monomials of
$\frac{R}{I_{S_a}^{[\mathbf{n}]}}$ is given by
\[\dim_k\left(\frac{R}{I_{S_a}^{[\mathbf{n}]}} \right)
= |{\rm PF}_n^a | =  \frac{(n+1)!}{2} ,\quad (1 \le a \le 3).\]
\label{T3}
\end{theorem}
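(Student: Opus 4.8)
The target value $\frac{(n+1)!}{2}$ satisfies the first-order recurrence $d_n = (n+1)\,d_{n-1}$ with $d_1 = 1$, so the whole statement reduces to proving, for each $a$, that the quantity $N_a(n) := \dim_k\!\left(\frac{R}{I_{S_a}^{[\mathbf{n}]}}\right) = |{\rm PF}_n^a|$ (the identification of the two being furnished by Lemma \ref{L2} and Proposition \ref{P7}) obeys the same recurrence with $N_a(1) = 1$. The plan is therefore to establish
\[
N_a(n) = (n+1)\,N_a(n-1), \qquad N_a(1) = 1, \quad (1 \le a \le 3),
\]
and then read off $N_a(n) = \frac{(n+1)!}{2}$ by induction. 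The base case is immediate: for $n=1$ the only parking function is $(0)$, which lies in each ${\rm PF}_1^a$.

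For the recurrence I would pursue a direct combinatorial enumeration of the sets ${\rm PF}_n^a$ described just before the theorem, exploiting that their defining conditions force a rigid structure on the coordinates that exceed the parking bound. For instance, in ${\rm PF}_n^2$, if $p_i \ge i$ and $p_{i'} \ge i'$ with $i < i'$, then the condition applied at $j = i'$ gives $p_{i'} \le i' - 2$, a contradiction; hence there is at most one index $i$ with $p_i \ge i$. Stratifying each ${\rm PF}_n^a$ according to this (possibly absent) exceptional index and then peeling off the coordinate indexed by $n$, I would construct an $(n+1)$-to-one surjection ${\rm PF}_n^a \to {\rm PF}_{n-1}^a$, which yields the recurrence.

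An alternative route, which keeps the argument uniform with the proof of Theorem \ref{T1}, is to apply the chain-sum formula of Proposition \ref{P7},
\[
N_a(n) = \sum_{i=1}^n (-1)^{n-i} \sum_{\substack{(C_1,\ldots,C_i) \in \mathcal{F}_{i-1}^a \\ C_1 \cup \cdots \cup C_i = [n]}} \prod_{q=1}^i \Big(\prod_{j \in C_q \setminus C_{q-1}} \mu_{j,C_q}^a\Big),
\]
and to split the chains in $\Sigma_n(S_a)$ into types according to how they involve the elements carrying the index $n$ — those in which $n$ appears only in the top element, those built over $\Sigma_{n-1}(S_a)$, and so on — exactly as the Type-I/II/III decomposition was used for $\Sigma_n$ earlier. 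Matching the $\mu^a$-weights across these families should collapse the alternating sum to $(n+1)\,N_a(n-1)$.

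The main obstacle is that the three posets $\Sigma_n(S_1), \Sigma_n(S_2), \Sigma_n(S_3)$ have genuinely different cover relations, so each case needs its own bookkeeping; moreover, the cellular resolution is minimal only for $a=1$ and non-minimal for $a=2,3$, so in the latter cases the chain sum carries extra cancellation that must be tracked carefully. I expect the cleanest writeup to handle $a=1$ through the chain-sum/poset decomposition, where minimality keeps the alternating sum transparent, and to handle $a=2,3$ through the stratified count of ${\rm PF}_n^2$ and ${\rm PF}_n^3$, in each case isolating the coordinates that may exceed the parking bound and counting the resulting reduced functions to expose the factor $n+1$.
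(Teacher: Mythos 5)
Your proposal is a strategy outline rather than a proof, and its load-bearing devices fail as stated. First, the claimed $(n+1)$-to-one surjection ${\rm PF}_n^a \to {\rm PF}_{n-1}^a$ obtained by ``peeling off the coordinate indexed by $n$'' does not exist in the naive form: truncation does not even map ${\rm PF}_n^a$ into ${\rm PF}_{n-1}^a$. For example, $(0,2,1) \in {\rm PF}_3^2$ (the condition triggered at $i=2$ only requires $p_3 < 2$), but its truncation $(0,2)$ is not a parking function of length $2$. Any correct surjection must be built differently, and you never define one. Second, the structural fact you prove for $a=2$ (at most one index with $p_i \ge i$) is genuinely false for $a=3$: the sequence $(1,2,1,0)$ lies in ${\rm PF}_4^3$ --- the condition at $i=1$ forces $p_2 \le 2$, $p_3 \le 1$, $p_4 = 0$ and the condition at $i=2$ forces $p_3 \le 2$, $p_4 \le 1$, all satisfied --- yet it has two exceptional indices, $p_1 \ge 1$ and $p_2 \ge 2$. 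So the stratification you propose for $a=3$ does not get off the ground; the same phenomenon (multiple indices with $p_i = i$) also occurs in ${\rm PF}_n^1$. Third, the chain-sum route is only an expectation (``should collapse''); when the type decomposition on $\Sigma_n(S_2)$ is actually carried out, as in the paper, the alternating sum does not collapse to $(n+1)N_2(n-1)$ but to the inhomogeneous recurrence $\tilde{\alpha}_n = n\,\tilde{\alpha}_{n-1} + \frac{n!}{2}$, where the inhomogeneous term comes from evaluating the subsum $\tilde{\alpha}_n''$ over chains whose bottom element has size $2$; proving $\tilde{\alpha}_n'' = -\frac{(n-1)(n!)}{2}$ requires a separate pairing-and-cancellation argument among those chains, which is precisely the hard part and is absent from your plan.

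For comparison, the paper proves $a=1$ via the second formula of Proposition \ref{P7}: all terms from chains containing a singleton vanish, the surviving chains are nested intervals $[j,n]$, and the resulting sum is evaluated through the scalar recurrence $\alpha_n = \frac{n-2}{n-1}\alpha_{n-1} + 1$. For $a=2$ it uses the first formula with the decomposition described above, and for $a=3$ a pairing argument cancels all but $2n-1$ explicit chains, giving $n(n!) - \binom{n}{2}(n-1)! = \frac{(n+1)!}{2}$ directly. Your direct-count idea is salvageable for $a=2$, and with no recurrence needed: once the unique exceptional index $i$ is fixed, the other coordinates are bounded by $0,1,\dots,i-2,i-1,\dots,n-2$ in sorted order, hence automatically satisfy the parking condition, so $p_i$ ranges freely over $\{i,\dots,n-1\}$ and one gets $n! + \sum_{i=1}^{n-1}(n-i)(n-1)! = \frac{(n+1)!}{2}$; this parallels the enumeration of ${\rm PF}_n^1$ that the paper gives after the theorem. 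But as submitted --- with the surjection unconstructed, the $a=3$ stratification invalid, and the claimed collapse of the chain sum unproven --- the proposal does not prove the theorem.
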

\begin{proof} As $\dim_k\left(\frac{R}{I_{S_a}^{[\mathbf{n}]}}
\right)=1$ for $n=1$, we assume that $n >1$. \\
(i) Let $a=1$. Using the second formula
\[
\dim_k\left(\frac{R}{I_{S_1}^{[\mathbf{n}]}}\right)
=   \sum_{\substack{ 0 \le i \le n; \\
(C_1,\ldots,C_i) \in \mathcal{F}_{i-1}^1}} (-1)^{i}
\left( \prod_{q=1}^i (\prod_{j \in C_q \setminus C_{q-1}} 
(\mu_{j,\{j\}}^1-\mu_{j,C_q}^1 ))\right) 
\left( \prod_{l \notin C_i} \mu_{l,\{l\}}^1
\right)
\]
 in Proposition \ref{P7}, we shall show that
\begin{equation}
\dim_k\left(\frac{R}{I_{S_1}^{[\mathbf{n}]}} \right)
= n (n!) + (n-1)((n-1)!) \sum_{\substack{1 \le i \le n;\\
0=j_0 <j_1 < \ldots < j_i < n}} (-1)^i 
\frac{1}{\prod_{q=2}^i j_q}.
\label{EQ6}
\end{equation}
The term corresponding to the empty chain is $n (n!)$. 
Also, for a (strict) chain
$C_1 \prec \ldots \prec C_i$ in $\mathcal{F}_{i-1}^1$, the corresponding
term in the second formula is zero if the chain has a singleton
member. Thus surviving terms are of the form $C_{l}=
[j_{i-l+1},n]$ for some sequence 
$0 = j_0 < j_1 < \ldots < j_i <n$. Note that the term 
corresponding to such a chain is precisely,
$(-1)^i \frac{(n-1)((n-1)!)}{j_2j_3\ldots j_i}$. This proves
(\ref{EQ6}).
Let $\alpha_n = \sum_{i \ge 1} (-1)^{i+1}
\sum_{0=j_0 <j_1 < \ldots < j_i < n} 
\frac{1}{\prod_{q=2}^i j_q}$.  
 Clearly, $\alpha_1=0$. For $n >1$,
we claim that 
$\alpha_n = \frac{n}{2}$. We have,
\begin{eqnarray*}
\alpha_n & = & \sum_{i \ge 1} (-1)^{i+1}
\sum_{0=j_0 <j_1 < \ldots < j_i < n-1} 
\frac{1}{\prod_{q=2}^i j_q} + \sum_{i \ge 1} (-1)^{i+1}
\sum_{0=j_0 <j_1 < \ldots < j_i =n-1} 
\frac{1}{\prod_{q=2}^i j_q} \\
& = & \alpha_{n-1} + \frac{1}{n-1} \sum_{i \ge 2} (-1)^{i+1}
\sum_{0=j_0 <j_1 < \ldots < j_{i-1} < n-1} 
\frac{1}{\prod_{q=2}^{i-1} j_q} + 1 \\
& = & \alpha_{n-1} -\frac{1}{n-1} \alpha_{n-1} + 1
= \frac{n-2}{n-1} \alpha_{n-1}+1. 
\end{eqnarray*}
On solving this recurrence relation, we get 
$\alpha_n = \frac{n}{2}$ for $n >1$. 
Now in view of (\ref{EQ6}),
\[
\dim_k\left(\frac{R}{I_{S_1}^{[\mathbf{n}]}} \right)
= n (n!) + (n-1)((n-1)!) \left(\frac{-n}{2} \right) = \frac{(n+1)!}{2}.
\]
(ii) Let $a=2$. 
As $\dim_k\left(\frac{R}{I_{S_a}^{[\mathbf{n}]}}
\right)=1$ or $3$ for $n=1$ or $2$, respectively, we assume that
$n >2$.
Suppose $\mathcal{F}^2[n] = \cup_{i=1}^n \{
(C_1,\ldots,C_i) \in \mathcal{F}_{i-1}^2 : \cup_{j=1}^i C_j
=[n]\}$. For $\mathcal{C}=(C_1,\ldots,C_i) \in \mathcal{F}^2[n]$, we write 
$\mu^2(\mathcal{C}) = \prod_{q=1}^i \left(\prod_{j \in C_q \setminus C_{q-1}} 
\mu_{j,C_q}^2 \right)$. In view of the first formula in Proposition \ref{P7}, we have
\[ \tilde{\alpha}_n = \dim_k\left(
\frac{R}{I_{S_2}^{[\mathbf{n}]}}\right) = \sum_{\mathcal{C}
\in \mathcal{F}^2[n]} (-1)^{n-\ell(\mathcal{C})-1} 
\mu^2(\mathcal{C}).
\]
Now decompose $\mathcal{F}^2[n]=\mathcal{F}^2[n]' \coprod
\mathcal{F}^2[n]''$, where $\mathcal{C}=(C_1,\ldots,C_i) 
\in \mathcal{F}^2[n]'$ if $|C_1|=1$ and
$\mathcal{C} \in \mathcal{F}^2[n]''$ if $|C_1|=2$. 
Then $\tilde{\alpha}_n= \tilde{\alpha}_n' + \tilde{\alpha}_n''$,
where   
\[\tilde{\alpha}_n' = \sum_{\mathcal{C}
\in \mathcal{F}^2[n]'} (-1)^{n-\ell(\mathcal{C})-1} 
\mu^2(\mathcal{C}) \quad {\rm and}\quad \tilde{\alpha}_n''=\sum_{\mathcal{C}
\in \mathcal{F}^2[n]''} (-1)^{n-\ell(\mathcal{C})-1} 
\mu^2(\mathcal{C}).
\]
A chain $\mathcal{C}=(C_1,\ldots,C_i) \in 
\mathcal{F}^2[n]'$ is called a {\em Type-I, 
Type-II or Type-III } chain, if
$(C_1,C_2)=(\{i\}, \{i,n\})$ for $i<n$, 
$(C_1,C_2)=(\{n\}, \{i,n\})$  for $i < n$ or
$(C_1,C_2)=(\{n\}, \{i,n-1\})$ for $i < n-1$, respectively. Now
\begin{eqnarray*}
\tilde{\alpha}_n' & = & \left[\sum_{\substack{\mathcal{C}\in \mathcal{F}^2[n]'; 
\\ {\rm Type-I}}} + \sum_{\substack{\mathcal{C}\in \mathcal{F}^2[n]'; 
\\ {\rm Type-II}}} + \sum_{\substack{\mathcal{C}\in \mathcal{F}^2[n]'; 
\\ {\rm Type-III}}} \right]
~(-1)^{n-\ell(\mathcal{C})-1}~ \mu^2(\mathcal{C}) \\
& = & n \tilde{\alpha}_{n-1}' -\frac{n}{n-1} \tilde{\alpha}_n''
+n \tilde{\alpha}_{n-1}'' = n \tilde{\alpha}_{n-1}-\frac{n}{n-1}
\tilde{\alpha}_{n}''.
\end{eqnarray*}
Claim : $\tilde{\alpha}_n'' = - \frac{(n-1) (n!)}{2}$.\\
For $1 \le t \le n-1$,
consider saturated chains $\mathcal{C}^{(t)}$
 in $\mathcal{F}^2[n]''$  of the form 
 \[
 \mathcal{C}^{(t)} : \{t,n\} \prec \{t,n-1\} \prec \ldots
 \prec \{t,t+1\} \prec \{t-1,t\} \prec \ldots \prec \{1,2\} .
 \]
 Then $\mu^2(\mathcal{C}^{(t)}) = t ((n-1)!)$. 
Any other chain in  $\mathcal{F}^2[n]''$ is either of the form
 \[
 \mathcal{C}: \{r,n\} \prec \ldots \prec \{r,r+1\} \prec \ldots \prec  \{s-1,s\} \prec
 \{l,s-1\} \prec \{l,s-2\} \prec
 \ldots \prec \ldots
 \]
or
 \[
 \mathcal{C}': \{r,n\} \prec \ldots \prec \{r,r+1\} \prec \ldots \prec \{s'-1,s'\}
  \prec \{l',s'-2\} \prec \ldots \prec \ldots , \quad  ({\rm for}~3 \le r \le n-1),
 \]
where $s$ (or $s'$) is the largest integer such that 
$\{l,s-1\}$ covers
 $\{s-1,s\}$ in $\mathcal{C}$ (or $\{l',s'-1\}$ is
 not in $\mathcal{C}'$) for some $l < s-2$ (or $l' < s'-2$).
Let $\tilde{\mathcal{C}} =
 \mathcal{C}\setminus \{\{l,s-1\}\}$ be the chain obtained from
$\mathcal{C}$ on deleting  $\{l,s-1\}$ and 
 $\tilde{\mathcal{C}}' = \mathcal{C}' \cup \{\{l',s'-1\}\}$
be the chain obtained from 
$\mathcal{C}'$ on adjoining $\{l',s'-1\}$.
Clearly,
 $\mu^2(\mathcal{C}) = \mu^2(\tilde{\mathcal{C}})$ and
 $\mu^2(\mathcal{C}') = \mu^2(\tilde{\mathcal{C}}')$. As length
 $\ell(\mathcal{C}) = \ell(\tilde{\mathcal{C}})+1$
 and $\ell(\mathcal{C}')=\ell(\tilde{\mathcal{C}}')-1$, the
 terms in 
 $\tilde{\alpha}_n''  =  \sum_{\mathcal{C}
\in \mathcal{F}^2[n]''} (-1)^{n-\ell(\mathcal{C})-1} 
\mu^2(\mathcal{C})$ corresponding to chains $\mathcal{C}
\in \mathcal{F}^2[n]''$ different from $\mathcal{C}^{(t)}$ 
cancel out. Thus
 \[
 \tilde{\alpha}_n''  = \sum_{t=1}^{n-1} 
 (-1)^{n-\ell(\mathcal{C}^{(t)})-1}~ \mu^2(\mathcal{C}^{(t)})
 =  \sum_{t=1}^{n-1} (-1)^{n-(n-2)-1}~ t((n-1)!) = 
- \frac{(n-1)(n!)}{2}.
\]
 Now $\tilde{\alpha}_n = \tilde{\alpha}_n'+\tilde{\alpha}_n''=
 n\tilde{\alpha}_{n-1} -\frac{n}{n-1} \tilde{\alpha}_n'' + 
 \tilde{\alpha}_n'' = n \tilde{\alpha}_{n-1} + \frac{n!}{2}$. 
 On solving this recurrence, we get 
 $\tilde{\alpha}_n = \frac{(n+1)!}{2}$, as desired.\\
 (iii) Let $a=3$ and assume $n >2$. Proceeding as in part(ii), we 
 write
 \[  \dim_k\left(
\frac{R}{I_{S_3}^{[\mathbf{n}]}}\right) = \sum_{\mathcal{C}
\in \mathcal{F}^3[n]} (-1)^{n-\ell(\mathcal{C})-1} 
\mu^3(\mathcal{C}),
\]
where  $\mathcal{F}^3[n]$ is the collection of all chains
$\bar{\mathcal{C}}=(C_1,\ldots,C_i)$  in $\mathcal{F}_{i-1}^3$ (for some $i$) with
$\cup_{j=1}^i C_j =[n]$ and 
 $\mu^3(\bar{\mathcal{C}}) = \prod_{q=1}^i \left(\prod_{j \in C_q \setminus C_{q-1}} 
\mu_{j,C_q}^3 \right)$. For $1 \le t \le n-1$, let 
$\bar{\mathcal{C}}^{(t)}$ be the chain in $\mathcal{F}^3[n]$ of 
the form
\[
\bar{\mathcal{C}}^{(t)} : \{t\} \prec \{t,t+1\} \prec \ldots
\prec \{t,n-1\} \prec \{t, n\} 
\prec \{t-1,n\} \prec \ldots \prec \{1,n\}
\]
 and $\bar{\mathcal{C}}^{(t)}\setminus\{\{t\}\}$ is the chain 
 obtained from $\bar{\mathcal{C}}^{(t)}$ by deleting the first element
 $\{t\}$. Now $\mu^3(\bar{\mathcal{C}}^{(t)}) = n!$
 and $\mu^3(\bar{\mathcal{C}}^{(t)} \setminus \{\{t\}\}) = 
 t ((n-1)!)$.
There is one more chain $\bar{\mathcal{C}} : \{n\} 
\prec \{n-1,n\} \prec \ldots \prec \{1,n\}$ in 
$\mathcal{F}^3[n]$, with $ \mu^3(\bar{\mathcal{C}}) = n!$.
  As in part (ii), it can be shown that the terms
 corresponding to remaining chains cancel out. Thus
 \[
 \dim_k\left(
\frac{R}{I_{S_3}^{[\mathbf{n}]}}\right) = n(n!) - 
(1+2+\ldots+ (n-1))((n-1)!) = \frac{(n+1)!}{2} .
\]
\hfill $\square$
\end{proof}
Theorem \ref{T3} shows that the integer sequence
$\left\lbrace \dim_k\left(\frac{R}{I_{S_a}^{[\mathbf{n}]}}\right)
=\frac{(n+1)!}{2} \right\rbrace_{n=1}^{\infty} $ 
for $1 \le a \le 3$ is
the integer sequence (A001710) in OEIS \cite{S}. As
$|{\rm PF}_n^a| = \frac{(n+1)!}{2}$, it is expected that the set
${\rm PF}_n^a$ could be easily enumerated. Let $\mathbf{p} \in 
{\rm PF}_n^1$. Then $p_t \le t;~\forall t$ and $p_i = i$ implies
that $p_j = 0$ for some $j \in [i+1,n]$. We count $\mathbf{p} 
\in {\rm PF}_n^1$ according to the value $s$ of the 
largest $t \in [n]$ with $p_t=t$.  If $p_t  < t; 
\forall t \in [n]$, then we take $s=0$. As $p_n <n$, we have
$0 \le s \le n-1$. For $s=0$, any 
$\mathbf{p}=(p_1,\ldots,p_n) \in \mathbf{N}^n$ such that
 $p_t < t; ~\forall t$ is a parking function and number
 of such $\mathbf{p} \in {\rm PF}_n^1$ is precisely
 $\prod_{t=1}^{n} (t) = n!$. Now let $s \ge 1$. Any 
 sequence $\mathbf{p}=(p_1,\ldots,p_n) \in \mathbb{N}^n$ 
 satisfying  conditions
 \begin{equation}
 p_t \le t ~\forall t <s, ~~ p_s=s, ~~{\rm and}~ p_j < j ~ 
 \forall  j > s,  {\rm with~ at~ least~ one}~ p_j = 0,
 \label{EQ7}
 \end{equation} 
 is always a parking function. The number of $\mathbf{p}$ satisfying 
 conditions (\ref{EQ7}) is 
 \[
 \prod_{t=1}^{s-1} (t+1) \left[ \prod_{j=s+1}^n j -
 \prod_{j'=s+1}^n (j'-1) \right] = (n-s) ((n-1)!).
 \]
 This shows that $|{\rm PF}_n^1| = \sum_{s=0}^{n-1} (n-s) ((n-1)!)
 = \frac{(n+1)!}{2}$. Similarly, ${\rm PF}_n^a$
 for $a=2,3$ can also be enumerated. However, it is still
 an interesting problem to construct an (explicit) bijection
 $\phi : {\rm PF}_n^a \longrightarrow F_{n+1}(21)$, where 
 $F_{n+1}(21)$ is the set of rooted-labelled increasing 
 forests on $[n+1]$.
 
{\sc Case-2} : To monomial ideals $I_{T_1}^{[\mathbf{n}]}$ and
$I_{T_2}^{[\mathbf{n}]}$, we  
associate finite posets $\Sigma_n(T_1)$ and 
$\Sigma_n(T_2)$ respectively, as below.
\begin{enumerate}
\item[{\rm (i)}] Let $\Sigma_n(T_1) =
\{ \{\ell\}, \{i, n\} : 1 \le \ell \le n-1; ~1 \le i \le n\}$, where $\{n,n\}=\{n\}$.
We define a poset structure on $\Sigma_n(T_1)$ by describing
cover relations. For $\ell, \ell' \in [n-1]$ and $i,i' \in [n]$,
$\{\ell\}$ covers $\{\ell'\}$, if $\ell'=\ell+1$. 
Also, $\{i,n\}$ covers $\{\ell'\}$
(or $\{i',n\}$) if $i=\ell'$ (respectively, $i'=i+1$). 
The monomial labels $\omega_{\{\ell\}}=x_{\ell}^{\ell+1},
~\omega_{\{n\}}=x_n^n$ and 
$\omega_{\{i,n\}}= x_i^i x_{n}^i$ for $1 \le \ell, i < n$. 
Set $\hat{\mu}_{j,C}^{1}$
 for $C \in \Sigma_n(T_1)$ so that 
 $\omega_C = \prod_{j \in C} x_j^{\hat{\mu}_{j,C}^{1}}$.
 \item[{\rm (ii)}] Let $\Sigma_n(T_2) = \Sigma_n(T_1)$. But
 the poset structure on $\Sigma_n(T_2)$ is obtained 
 by interchanging $\{i\}$ with $\{n-i\}$ (and also,
  $\{i,n\}$ with
 $\{n-i,n\}$)(for $1 \le i < n$)
 in the poset $\Sigma_n(T_1)$. The cover relations of the poset
$\Sigma_n(T_2)$ are given as follows.
For $\ell, \ell',i,i' \in [n-1]$,
$\{\ell\}$ covers $\{\ell'\}$, if $\ell'=\ell-1$ and 
$\{i,n\}$ covers $\{\ell'\}$
(or $\{i',n\}$) if $i=\ell'$ (respectively, $i'=i-1$). 
In addition, $\{1,n\}$ covers $\{n\}$.
The monomial labels $\omega_{\{\ell\}}=x_{\ell}^{n-\ell+1},
~\omega_{\{n\}}=x_n^n$ and 
$\omega_{\{i,n\}}= x_i^{n-i} x_{n}^{n-i}$ for $1 \le \ell, i < n$. 
Set $\hat{\mu}_{j,C}^{2}$
 for $C \in \Sigma_n(T_1)$ so that 
 $\omega_C = \prod_{j \in C} x_j^{\hat{\mu}_{j,C}^{2}}$.
\end{enumerate} 

The Hasse diagram of $\Sigma_4(T_1)$ 
and $\Sigma_4(T_2)$ are given in {\sc Figure-3}. 
\begin{figure}[!hbpt]
{\begin{tikzpicture}
\node (max) at (0,2) {$14$};
\node (a) at (-1.5,0.5) {$1$};
\node (c) at (1.5,0.5) {$24$};
\node (d) at (-1.5,-1) {$2$};
\node (g) at (-1.5,-2.5) {$3$};
\node (b) at (1.5,-2.5) {$4$}; 
\node (f) at (1.5,-1) {$34$};
\node (h) at (0,-3.5) {$\Sigma_4(T_1)$};
\draw (d) -- (a) -- (max)(c) -- (max)(f) -- (c)(d) -- (g)(f) -- (b)(d) -- (c)(g) -- (f);
\end{tikzpicture}}
\hspace{8em}
{\begin{tikzpicture}
\node (max) at (9,2) {$34$};
\node (a) at (7.5,0.5) {$3$};
\node (c) at (10.5,0.5) {$24$};
\node (d) at (7.5,-1) {$2$};
\node (g) at (7.5,-2.5) {$1$};
\node (b) at (10.5,-2.5) {$4$}; 
\node (f) at (10.5,-1) {$14$};
\node (h) at (9,-3.5) {$\Sigma_4(T_2)$};
\draw (g) -- (d) -- (a) -- (max) -- (c) -- (f) -- (b)(g) -- (f)(d) -- (c);
\end{tikzpicture}}
\caption{}
\end{figure}

\begin{proposition}
{\rm (i)}. ~The ideals $I_{T_1}^{[\mathbf{n}]}$ 
and $I_{T_2}^{[\mathbf{n}]}$ are order
monomial ideals. \\
{\rm (ii)}.~The free complex 
$\mathbb{F}_{*}(\Delta(\Sigma_n(T_b)))$ is the minimal 
cellular resolution of $I_{S_a}^{[\mathbf{n}]}$ supported on the 
order complex $\Delta(\Sigma_n(T_b))$ for $1 \le b \le 2$. Thus
the $r^{th}$ Betti number $\beta_r(I_{T_b}^{[\mathbf{n}]})$ is given by 
\[ \beta_r(I_{T_b}^{[\mathbf{n}]}) =  {n \choose r+1} +
(r+1) {n-1 \choose r+1} + r {n-1 \choose r},  
\quad (1 \le r \le n-1).
\]
\label{P8}
\end{proposition}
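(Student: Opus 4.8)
The plan is to prove the two claims about $I_{T_b}^{[\mathbf{n}]}$ separately. Part (i) asserts that both ideals are order monomial ideals; part (ii) asserts that the associated free complexes are \emph{minimal} cellular resolutions and computes the Betti numbers. I would handle part (i) first, exactly as in Proposition~\ref{P6}: the poset structure on $\Sigma_n(T_b)$ is defined by the given cover relations, so I must verify that for each pair $C, C'$ of elements there is an upper bound $D$ with $\omega_D \mid \mathrm{LCM}(\omega_C, \omega_{C'})$. Because the posets $\Sigma_n(T_b)$ are small and highly structured---each element is either a singleton $\{\ell\}$ or a pair $\{i,n\}$ containing the fixed top coordinate $n$---this reduces to a short, explicit case check using the monomial labels $\omega_{\{\ell\}} = x_\ell^{\ell+1}$, $\omega_{\{n\}} = x_n^n$, $\omega_{\{i,n\}} = x_i^i x_n^i$ (and the analogous labels for $T_2$). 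Once the order monomial property holds, the fact that $\mathbb{F}_*(\Delta(\Sigma_n(T_b)))$ is a cellular resolution follows immediately from the Postnikov--Shapiro theorem (Theorem~2.4 of \cite{AC3}) already invoked in Proposition~\ref{P6}.

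For part (ii), the new content beyond Proposition~\ref{P6} is the claim of \emph{minimality}. I would establish this by the standard criterion: the cellular resolution supported on $\Delta(\Sigma_n(T_b))$ is minimal if and only if, for every covering pair $C \prec D$ in the poset (equivalently, every edge of the order complex together with its faces), the monomial label strictly increases, i.e. $\omega_C \ne \mathrm{LCM}(\omega_{C'} : C' \in F)$ whenever $F$ is a face and $C' \prec C$ is a codimension-one subface. Concretely, minimality holds exactly when no face label equals the label of one of its boundary facets. I would verify this directly from the explicit monomial labels: along each cover relation the exponent of some variable genuinely rises, so no cancellation of generators occurs in the differential. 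This is a finite check that exploits the special form of the labels (the appearance of the distinguished variable $x_n$ in every pair element is what forces strict growth).

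Finally, to obtain the Betti number formula
\[
\beta_r(I_{T_b}^{[\mathbf{n}]}) = \binom{n}{r+1} + (r+1)\binom{n-1}{r+1} + r\binom{n-1}{r}, \quad (1 \le r \le n-1),
\]
I would use that, once the resolution is minimal, $\beta_r$ equals the number of $r$-dimensional faces of $\Delta(\Sigma_n(T_b))$, i.e. the number of strict chains of length $r$ in the poset $\Sigma_n(T_b)$. I would therefore count these chains by classifying them according to how many singletons $\{\ell\}$ and how many pairs $\{i,n\}$ they contain, and where the transition between the two types occurs in the chain. The three summands in the formula should correspond to the natural combinatorial breakdown of such chains (for instance, chains lying entirely among the pairs, chains passing through the singleton-to-pair junction, and chains involving the exceptional element $\{n\}$). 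Since $\Sigma_n(T_1)$ and $\Sigma_n(T_2)$ are isomorphic as abstract posets---the latter being obtained from the former by the relabelling $\{i\} \leftrightarrow \{n-i\}$---their order complexes are identical, so a single chain count suffices for both $b$.

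\emph{The main obstacle} will be the chain-counting in part (ii): matching the combinatorial enumeration of $r$-chains in $\Sigma_n(T_b)$ precisely to the closed form $\binom{n}{r+1} + (r+1)\binom{n-1}{r+1} + r\binom{n-1}{r}$. The three binomial terms strongly suggest a clean tripartite classification of chains, but pinning down the exact correspondence---especially getting the multiplicities $(r+1)$ and $r$ right---requires care with the cover relations at the boundary between singletons and pairs and with the role of the special vertices $\{n\}$ and $\{1,n\}$. The verification of the order monomial property and of minimality, by contrast, are routine once the labels are written out.
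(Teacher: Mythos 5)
Your overall strategy coincides with the paper's: part (i) by direct verification from the labels, minimality via the criterion that the monomial label on any face of $\Delta(\Sigma_n(T_b))$ differs from the labels on its proper subfaces, the identification of $\beta_r(I_{T_b}^{[\mathbf{n}]})$ with the number of strict chains of length $r$ in $\Sigma_n(T_b)$, the reduction to $b=1$ via the poset isomorphism $\Sigma_n(T_1)\cong\Sigma_n(T_2)$, and a three-way classification of chains. However, the one ingredient you explicitly deferred --- the precise tripartite classification --- is guessed incorrectly, and with your proposed classes the count would not close. In $\Sigma_n(T_1)$ no singleton ever lies above a pair, and the element $\{n\}=\{n,n\}$ is incomparable to every singleton $\{\ell\}$: the relation ``$\{i,n\}$ covers $\{\ell'\}$ iff $i=\ell'$'' cannot hold with $i=n$ and $\ell'\le n-1$, so $\{n\}$ covers nothing. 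Hence every chain containing $\{n\}$ consists entirely of pairs; your third class, \emph{chains involving the exceptional element} $\{n\}$, is not a separate class at all but is absorbed into the first one, which is precisely why the first term is $\binom{n}{r+1}$ (subsets of $[n]$, with the value $n$ encoding $\{n\}$) rather than $\binom{n-1}{r+1}$.

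The correct classification keys instead on the junction between the singleton part (bottom) and the pair part (top) of a chain $C_1\prec\cdots\prec C_{r+1}$. (1) All members are pairs $\{t,n\}$ with $t\in[n]$: these biject with $(r+1)$-subsets of $[n]$, giving $\binom{n}{r+1}$. (2) At least one singleton occurs and the junction, if pairs are present, is \emph{strict}, i.e.\ $C_s=\{t_s\}$ and $C_{s+1}=\{t_{s+1},n\}$ with $t_{s+1}<t_s$: such chains (including the all-singleton chains) biject with $(r+1)$-subsets of $[n-1]$ equipped with a marked element $t_s$, where entries $\ge t_s$ become singletons and entries $<t_s$ become pairs; this gives $(r+1)\binom{n-1}{r+1}$. (3) The junction is \emph{degenerate}, i.e.\ $C_s=\{t_s\}$ and $C_{s+1}=\{t_s,n\}$ share the same index: such chains biject with $r$-subsets of $[n-1]$ with a marked element, giving $r\binom{n-1}{r}$. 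With this correction your plan goes through verbatim and reproduces the paper's proof; everything else in your proposal (order monomial property, minimality, and the reduction from $T_2$ to $T_1$) is sound as stated.
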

\begin{proof}
From the definitions of the poset $\Sigma_n(T_b)$, it is 
clear that the ideal $I_{T_b}^{[\mathbf{n}]}$ is
an order monomial ideal. Further, the cellular resolution
$\mathbb{F}_{*}(\Delta(\Sigma_n(T_b)))$ is the minimal 
resolution of $I_{S_a}^{[\mathbf{n}]}$ supported on the 
order complex $\Delta(\Sigma_n(T_b))$ because monomial label
on any face of $\Delta(\Sigma_n(T_b))$ is different from the
monomial label on subfaces. Thus the $r^{th}$ Betti number
$\beta_r(I_{T_b}^{[\mathbf{n}]})$ equals the number 
(strict) chains of
length $r$ in the poset $\Sigma_n(T_b)$. Since $\Sigma_n(T_2)$
is obtained from $\Sigma_n(T_1)$ by changing $i$ to $n-i$ for
$i \in [n]$, number of chains of length $r$ in both the posets
are same. We count chains of length $r$ in $\Sigma_n(T_1)$ for 
$0 \le r \le n-1$. Consider a (strict) chain 
\[ \mathcal{C} : 
C_1 \prec C_2 \prec \ldots \prec C_s \prec C_{s+1} \prec 
\ldots \prec C_{r+1} .
\]
If all $C_j$ are of the form $\{t_j,n\}$ for $t_j \in [n]$, then the
chain $\mathcal{C}$ can be identified with a $r+1$-subset 
$\{t_1,\ldots,t_{r+1}\}$ of
$[n]$. Thus number of such chains is ${n \choose r+1}$. If 
$C_s = \{ t_s \}$ and $C_{s+1}=\{t_{s+1}, n \}$ for some
$s$ with $t_{s+1} <  t_s$, then the chain $\mathcal{C}$ can be 
identified with a $r+1$-subset
$\{t_1,\ldots,t_{r+1}\}$
 of $[n-1]$ with a chosen element $t_s$. Any $j \in
 \{t_1,\ldots,t_{r+1}\}$ represent singleton $\{j\}$ if $j \ge 
 t_s$, while it represent $\{j,n\}$ for $j < t_s$. The number of
 such chains is  precisely $(r+1)~{n-1 \choose r+1}$. Now we 
 count chains $\mathcal{C}$
 with $C_s =\{t_s\}$ and $C_{s+1}= \{t_s,n \}$
 (i.e., $t_s=t_{s+1}$). In this case, chain 
 $\mathcal{C}$ can be identified with a $r$-subset
 $\{t_1,\ldots,t_s=t_{s+1},\ldots,t_{r+1}\}$ of $[n-1]$ with
 a chosen element $t_s$. Thus number of such chains is
 $r~{n-1 \choose r}$. Since any $r$-chain $\mathcal{C}$ in 
 $\Sigma_n(T_1)$ is a chain of one of the three types, 
  we get the desired result.
 \hfill $\square$  
\end{proof}

Consider the following subsets of ${\rm PF}_n$
of parking function $\mathbf{p}=(p_1,\ldots,p_n)$.
\begin{enumerate}
\item[(i)] $\widehat{{\rm PF}}_n^1 = \{ \mathbf{p} \in {\rm PF}_n
: p_t \le t,~\forall t ~{\rm and~if}~p_i=i,~{\rm then}~ p_n <i \}$.
\item[(ii)] $\widehat{{\rm PF}}_n^2 = \{ \mathbf{p} \in {\rm PF}_n
: p_{n-t} \le t,~\forall t ~{\rm and~if}~p_{n-i}=i,~{\rm then}~ p_n <i \}$.
\end{enumerate}
In view of Lemma \ref{L1}, $\mathbf{x}^{\mathbf{p}}
\notin I_{T_b}^{[\mathbf{n}]}$ if and only if $ \mathbf{p}
\in \widehat{{\rm PF}}_n^b$ for $b=1,2$. Thus, 
$|\widehat{{\rm PF}}_n^b | = \dim_k\left(
\frac{R}{I_{T_b}^{[\mathbf{n}]}}\right)$. Also, the mapping
$(p_1,p_2,\ldots,p_{n-1},p_n) \mapsto 
(p_{n-1},p_{n-2},\ldots,p_1,p_n)$ induces a bijection between
$\widehat{{\rm PF}}_n^1$ and $\widehat{{\rm PF}}_n^2$.
\begin{theorem}
The number of standard monomials of 
$\frac{R}{I_{T_b}^{[\mathbf{n}]}}$ is given by
\[ |\widehat{{\rm PF}}_n^b | = \dim_k\left(
\frac{R}{I_{T_b}^{[\mathbf{n}]}}\right) = s(n+1,2); \quad 
(b=1,2),
\]
where $s(n+1,2)$ is the {\rm (}signless{\rm )} Stirling number of 
the first kind.
\label{T4}
\end{theorem}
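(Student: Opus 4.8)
The plan is to reduce immediately to the case $b=1$ and then enumerate the standard monomials directly, since the combinatorics here is simple enough that no resolution-theoretic machinery is needed. By the reversal bijection $(p_1,\ldots,p_{n-1},p_n)\mapsto(p_{n-1},\ldots,p_1,p_n)$ described above between $\widehat{{\rm PF}}_n^1$ and $\widehat{{\rm PF}}_n^2$, it suffices to compute $|\widehat{{\rm PF}}_n^1|=\dim_k\!\left(\frac{R}{I_{T_1}^{[\mathbf{n}]}}\right)$. Reading off the generators of $I_{T_1}^{[\mathbf{n}]}$ from Lemma \ref{L2}(iv), a monomial $\mathbf{x}^{\mathbf{p}}$ is standard exactly when $p_t\le t$ for $1\le t\le n-1$, $p_n\le n-1$, and the implication $p_i=i\Rightarrow p_n<i$ holds for each $i<n$. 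The first step I would carry out is to observe that these divisibility conditions already force $\mathbf{p}$ to be a parking function, so that the parking-function clause in the definition of $\widehat{{\rm PF}}_n^1$ is redundant and the count factorizes. Indeed, since $p_t\le t$ for $t<n$, for any threshold $j$ the set $\{t\le n-1: p_t\ge j\}$ is contained in $\{j,\ldots,n-1\}$ and has at most $n-j$ elements, with equality forcing $p_j=j$; the implication then forbids $p_n\ge j$ in that case, so $\#\{t:p_t\ge j\}\le n-j$ for all $j$, which is precisely the parking condition.

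Next I would evaluate the count by conditioning on the value $p_n=m$ with $0\le m\le n-1$. For the coordinates $i\le m$ the value $p_i=i$ is now forbidden, leaving the $i$ admissible values $\{0,\ldots,i-1\}$; for $m<i\le n-1$ all $i+1$ values $\{0,\ldots,i\}$ are admissible. Hence the number of admissible $(p_1,\ldots,p_{n-1})$ for a fixed $p_n=m$ is
\[
\left(\prod_{i=1}^{m} i\right)\left(\prod_{i=m+1}^{n-1}(i+1)\right)=m!\cdot\frac{n!}{(m+1)!}=\frac{n!}{m+1}.
\]
Summing over $m$ gives $|\widehat{{\rm PF}}_n^1|=\sum_{m=0}^{n-1}\frac{n!}{m+1}=n!\sum_{k=1}^{n}\frac1k$.

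Finally I would identify this harmonic sum with the Stirling number. Using the recurrence $s(n+1,2)=n\,s(n,2)+s(n,1)$ with $s(n,1)=(n-1)!$, dividing by $n!$ yields $\frac{s(n+1,2)}{n!}=\frac{s(n,2)}{(n-1)!}+\frac1n$, which telescopes from the base value $s(2,2)=1$ to give $s(n+1,2)=n!\sum_{k=1}^n\frac1k$. This matches the count above, so $\dim_k\!\left(\frac{R}{I_{T_b}^{[\mathbf{n}]}}\right)=s(n+1,2)$ for $b=1,2$, completing the proof. I expect the only genuinely substantive point—rather than the routine product evaluation or the standard Stirling identity—to be the verification in the first step that the explicit divisibility conditions automatically produce a parking function, since this is exactly what legitimizes treating the constraints coordinatewise and letting the product count go through cleanly.
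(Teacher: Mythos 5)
Your proof is correct, but it follows a genuinely different route from the paper's own proof of Theorem \ref{T4}. The paper's proof stays inside the resolution-theoretic framework: using the minimal cellular resolution of $I_{T_1}^{[\mathbf{n}]}$ supported on the order complex $\Delta(\Sigma_n(T_1))$ (Proposition \ref{P8}), it writes $\dim_k\left(\frac{R}{I_{T_1}^{[\mathbf{n}]}}\right)$ as the alternating sum $\sum_{\mathcal{C}} (-1)^{n-\ell(\mathcal{C})-1}\,\widehat{\mu}^1(\mathcal{C})$ over chains of $\Sigma_n(T_1)$ whose union is $[n]$, observes that these are exactly the $2n$ chains $\widehat{\mathcal{C}}^{(t)}, \widehat{\mathcal{C}}^{\prime(t)}$ for $1 \le t \le n$, and evaluates $\sum_{t=1}^n \bigl(n! - \tfrac{t-1}{t}\,n!\bigr) = n!\sum_{t=1}^n \tfrac{1}{t} = s(n+1,2)$. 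Your argument -- read the standard-monomial conditions off Lemma \ref{L2}(iv), condition on $p_n=m$, and multiply the coordinatewise counts to get $\tfrac{n!}{m+1}$ -- is instead the elementary enumeration that the paper itself only sketches as a remark \emph{after} the proof (``A nice formula \dots deserves a combinatorial proof''). Your version is more self-contained in two respects: you actually prove the point that the divisibility constraints alone force the parking property (the paper's remark asserts, without argument, that the sequences in question ``are always parking functions''), and this is precisely what legitimizes both the identification of standard monomials with $\widehat{{\rm PF}}_n^1$ and the coordinatewise product count; and you derive the identity $n!\sum_{k=1}^n \tfrac1k = s(n+1,2)$ from the Stirling recurrence rather than quoting it. What the paper's route buys in exchange is uniformity: the same chain-sum formula and poset apparatus handle Cases 1 and 3 (Theorem \ref{T3} and Proposition \ref{P9}), where no comparably clean direct count is available, so the authors reuse one mechanism across all cases.
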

\begin{proof}
We take $b=1$. 
Proceeding as in Proposition \ref{P7}, we get
\[ \dim_k\left(
\frac{R}{I_{T_1}^{[\mathbf{n}]}}\right) = \sum_{\mathcal{C}
\in \widehat{\mathcal{F}}^1[n]} (-1)^{n-\ell(\mathcal{C})-1}
~\widehat{\mu}^1(\mathcal{C}) ,
\]
where $\widehat{\mathcal{F}}^1[n]$ is the collection 
of all chains $\mathcal{C}=(C_1,\ldots,C_i)$ in 
$\Sigma_n(T_1)$ such that $C_1 \cup \ldots \cup C_i = [n]$
and 
$\widehat{\mu}^1(\mathcal{C}) = 
\prod_{q=1}^i \left(\prod_{j \in C_q \setminus C_{q-1}} 
\widehat{\mu}_{j,C_q}^1 \right)$. For $1 \le t \le n$, let 
$\widehat{\mathcal{C}}^{(n)} : \{n\} \prec \{n-1,n\} \prec \ldots \prec \{1,n\}$,
\[\widehat{\mathcal{C}}^{(t)} : \{n-1\} \prec \ldots \prec
\{t\} \prec \{t,n\} \prec \{t-1,n\} \prec \ldots \prec \{1,n\}; \quad ( 1 \le t \le n-1)
\]
and $\widehat{\mathcal{C}}^{\prime(t)}$ be the chain 
obtained from
$\widehat{\mathcal{C}}^{(t)}$ on deleting $\{t,n\}$. For
$t=n$, we have $\{n,n\} = \{n\}$. It is clear
that $\widehat{\mathcal{F}}^1[n] = 
\{ \widehat{\mathcal{C}}^{(t)}, 
\widehat{\mathcal{C}}^{\prime(t)} : 1 \le t \le n \}$. Also,
$\widehat{\mu}^1(\widehat{\mathcal{C}}^{(t)}) = n!$ and
$\widehat{\mu}^1(\widehat{\mathcal{C}}^{\prime(t)}) =
 \frac{t-1}{t} (n!)$ for $1 \le t \le n$. As 
$\ell(\widehat{\mathcal{C}}^{(t)}) =
\ell(\widehat{\mathcal{C}}^{\prime(t)}) + 1 = n-1$, we see that 
\begin{eqnarray*}
\dim_k\left(
\frac{R}{I_{T_1}^{[\mathbf{n}]}}\right) & = & \sum_{t=1}^n \left(
\widehat{\mu}^1(\widehat{\mathcal{C}}^{(t)})
- \widehat{\mu}^1(\widehat{\mathcal{C}}^{\prime(t)}) \right)
=\sum_{t=1}^n \left( n! - \frac{t-1}{t} n!\right) \\
& = &  \sum_{t=1}^n \frac{n!}{t}
 =  \left(1+\frac{1}{2} +\ldots+\frac{1}{n}\right) n! = 
 s(n+1,2).
\end{eqnarray*}
\hfill $\square$
\end{proof}
A nice formula $|\widehat{{\rm PF}}_n^1 |=
|\widehat{{\rm PF}}_n^2 |=s(n+1,2)$, 
deserves a combinatorial proof.
We count parking functions
$\mathbf{p}=(p_1,\ldots,p_n)$ in $\widehat{{\rm PF}}_n^1$
 according to the value of $p_n$. Clearly, $0 \le p_n \le n-1$. 
 For any $0 \le t \le n-1$, we see that $p_n=t$ implies that
 $p_i < i $ for all $i \le t$ and $p_j \le j$ for $j > t$. Also,
 any $(p_1,\ldots,p_n)$ with $p_n=t$ and $p_i < i$ 
 for all $ i \le t$, while
  $p_j \le j $ for all $t<j\le n-1$ is always a 
  parking function of length $n$. Thus number of 
  $\mathbf{p} = (p_1,\ldots,p_n) \in 
  \widehat{{\rm PF}}_n^1$ with $p_n=t$ is 
  $\left(\prod_{i=1}^t i\right)
  \left(\prod_{j=t+1}^{n-1} (j+1)\right) = \frac{n!}{t+1}$.
  Hence, $|\widehat{{\rm PF}}_n^1| = 
  \sum_{t=0}^{n-1} \frac{n!}{t+1}$.
  
Theorem \ref{T4} shows that the integer sequence 
$\left\lbrace \dim_k\left(
\frac{R}{I_{T_b}^{[\mathbf{n}]}}\right) = 
s(n+1,2) \right\rbrace_{n=1}^{\infty}$ for $b=1,2$ is the integer
sequence (A000254) in OEIS \cite{S}. 

{\sc Case-3} : We finally consider the monomial ideal
 $I_{U}^{[\mathbf{n}]}$. The minimal generators 
 $\prod_{j \in A} x_j^{\nu_{j,A}}$ of 
 $I_{U}^{[\mathbf{n}]}$ are parametrized by the 
 poset $\Sigma_n$. Again, it is straight forward 
 to verify that the ideal 
 $I_{U}^{[\mathbf{n}]}$ is an order monomial ideal and the
 cellular resolution $\mathbb{F}_{*}(\Delta(\Sigma_n))$ 
 supported on the order 
 complex $\Delta(\Sigma_n)$ is the minimal free resolution
 of $I_{U}^{[\mathbf{n}]}$. Thus $r^{th}$ Betti number
 $\beta_r(I_{U}^{[\mathbf{n}]}) = (r!) S(n+1,r+1)$ for
 $0 \le r \le n-1$.
 
 Now we describe standard monomials of 
 $\frac{R}{I_{U}^{[\mathbf{n}]}}$. Let $\overline{{\rm PF}}_n
 = \{ \mathbf{p}\in {\rm PF}_n : 
 \mathbf{x}^{\mathbf{p}} \notin I_{U}^{[\mathbf{n}]} \}$.
 \begin{lemma}
 Let $\mathbf{p}=(p_1,\ldots,p_n) \in {\rm PF}_n$. Then $\mathbf{p} \in 
 \overline{{\rm PF}}_n$ if and only if, there exists a 
 permutation $\alpha \in \mathfrak{S}_n$ such that 
 $p_{\alpha_i} < ~\nu_{\alpha_i,T_i}$ for all $i$, where
 $\alpha_i = \alpha(i)$, $T_1=[n]$ and $T_j = [n] \setminus 
 \{\alpha_1,\ldots,\alpha_{j-1}\}$ for $j \ge 2$. Also,
 $\nu_{j,T}$ is in the Lemma \ref{L1}.
 \label{L3}
 \end{lemma}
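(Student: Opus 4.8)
The plan is to translate membership in $I_U^{[\mathbf{n}]}$ into a condition on the exponents $\nu_{j,A}$ and then match it with the chain condition, exactly as the standard monomials of $\frac{R}{I_W(\mathbf{u})^{[\mathbf{u_n+c-1}]}}$ were characterized in Proposition~\ref{P2} (cf. Theorem~4.3 of \cite{AC1}). By Lemma~\ref{L2}(vi) the minimal generators of $I_U^{[\mathbf{n}]}$ are the monomials $\prod_{j \in A} x_j^{\nu_{j,A}}$ indexed by $A \in \Sigma_n$, so $\mathbf{x}^{\mathbf{p}} \notin I_U^{[\mathbf{n}]}$ if and only if no generator divides it, i.e.
\[
\forall\, \emptyset \ne A \subseteq [n] \ \ \exists\, j \in A \text{ with } p_j < \nu_{j,A};
\]
call this condition $(\ast)$. (The parking-function hypothesis plays no role beyond defining the ambient set $\overline{{\rm PF}}_n \subseteq {\rm PF}_n$.) The lemma thus amounts to proving that $(\ast)$ is equivalent to the existence of a permutation $\alpha$ with $p_{\alpha_i} < \nu_{\alpha_i,T_i}$ for all $i$.

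The key auxiliary fact I would isolate is a monotonicity property of $\nu_{j,A}$. The formulas $\nu_{j_1,A}=n-(j_t-j_1)$ and $\nu_{j_i,A}=j_i-j_{i-1}$ say precisely that $\nu_{j,A}$ is the \emph{cyclic gap} from the immediate predecessor of $j$ in $A$ up to $j$, taken in the cyclic order $1 \to 2 \to \cdots \to n \to 1$; indeed the wrap-around value $n-(j_t-j_1)=(n-j_t)+j_1$ is the cyclic distance from $j_t$ around to $j_1$, and $\sum_{j \in A}\nu_{j,A}=n$. From this reading it is immediate that enlarging the set can only move the predecessor closer, so that
\[
A \subseteq B,\ j \in A \ \Longrightarrow\ \nu_{j,B} \le \nu_{j,A}.
\]

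With this claim the equivalence is short and mirrors the restricted-parking-function argument. For $(\ast) \Rightarrow$ chain condition I would build $\alpha$ greedily: apply $(\ast)$ to $A=T_1=[n]$ to obtain $\alpha_1$ with $p_{\alpha_1}<\nu_{\alpha_1,[n]}$, then apply $(\ast)$ to the nonempty set $T_2=[n]\setminus\{\alpha_1\}$ to obtain $\alpha_2$, and iterate, each $T_i$ being a legitimate nonempty subset to which $(\ast)$ applies. Conversely, given such an $\alpha$ and an arbitrary nonempty $A$, let $i$ be the least index with $\alpha_i \in A$; then $\alpha_1,\dots,\alpha_{i-1}\notin A$ forces $A \subseteq T_i$, and the monotonicity claim gives $p_{\alpha_i}<\nu_{\alpha_i,T_i}\le\nu_{\alpha_i,A}$, so $j=\alpha_i$ witnesses $(\ast)$ for $A$.

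The only genuine obstacle is verifying the cyclic-gap monotonicity cleanly; once the interpretation of $\nu_{j,A}$ as a cyclic gap is made explicit, the inequality $\nu_{j,B}\le\nu_{j,A}$ follows because the predecessor of $j$ in $B \supseteq A$ lies cyclically between the predecessor of $j$ in $A$ and $j$ itself. The remaining two implications are then purely formal, as above.
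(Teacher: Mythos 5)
Your proof is correct, and it is essentially a self-contained version of what the paper leaves implicit: the paper's entire proof of Lemma \ref{L3} is the sentence that it is ``similar to the proof of Theorem 4.3 of \cite{AC1}'', so the details you supply are exactly what that citation stands in for. Your two reductions are sound: for a monomial ideal, $\mathbf{x}^{\mathbf{p}} \notin I_{U}^{[\mathbf{n}]}$ holds iff no generator from Lemma \ref{L2}(vi) divides $\mathbf{x}^{\mathbf{p}}$, which is your condition $(\ast)$; and the equivalence of $(\ast)$ with the permutation condition follows from the greedy construction (which, as you note, needs no monotonicity, only that each $T_i \neq \emptyset$) together with the converse argument via the least $i$ with $\alpha_i \in A$, which forces $A \subseteq T_i$ and then invokes monotonicity. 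The monotonicity claim itself is the only point genuinely specific to $I_U$, and it is correct: writing $\nu_{j,A} = \min\{d \in [n] : j - d \equiv a \ (\mathrm{mod}\ n) \text{ for some } a \in A\}$ (so $\nu_{j,\{j\}} = n$, and for $|A| \ge 2$ it is the cyclic gap back to the predecessor of $j$ in $A$, with $\sum_{j \in A} \nu_{j,A} = n$) makes the implication $A \subseteq B,\ j \in A \Rightarrow \nu_{j,B} \le \nu_{j,A}$ immediate, since the minimum is then taken over a larger set of $d$'s. This is the same mechanism that underlies the analogous characterization for the hypercubic ideal (Proposition \ref{P2}, also cited to Theorem 4.3 of \cite{AC1}), where the exponents $\mu_{j,T}^{\mathbf{u}}$ satisfy the corresponding monotonicity under enlarging $T$; your cyclic-gap interpretation is a clean way to isolate and verify it in the present case.
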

 \begin{proof}
 Proof is similar to the proof of Theorem 4.3 of \cite{AC1}.
 \hfill $\square$
 \end{proof}
 Proceeding as in Proposition \ref{P7}, we get a 
 combinatorial formula for the number of standard monomials of 
 $\frac{R}{I_{U}^{[\mathbf{n}]}}$.
 \begin{proposition}
 The number of standard monomials of 
$\frac{R}{I_{U}^{[\mathbf{n}]}}$ is given by
\[
|\overline{{\rm PF}}_n| = \dim_k\left(\frac{R}{I_{U}^{[\mathbf{n}]}}\right)
= \sum_{i=1}^n (-1)^{n-i} \sum_{\emptyset=C_0 
\subsetneq C_1 \subsetneq \ldots \subsetneq C_i =[n]}
\prod_{q=1}^i \left(\prod_{j \in C_q \setminus C_{q-1}} 
\nu_{j,C_q} \right),
\]
where summation is carried over all strict chains
$\emptyset=C_0 \subsetneq  C_1 \subsetneq \ldots
\subsetneq C_i =[n] $.
 \label{P9}
 \end{proposition}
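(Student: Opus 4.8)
The plan is to mimic the proof of Proposition \ref{P7}. Since $I_U^{[\mathbf{n}]}$ is an order monomial ideal whose minimal free resolution is the cellular resolution $\mathbb{F}_*(\Delta(\Sigma_n))$ supported on the order complex of $\Sigma_n$, the $(i-1)$-dimensional faces of $\Delta(\Sigma_n)$ are precisely the strict chains $C_1 \prec \ldots \prec C_i$ in $\Sigma_n$, and the monomial label on such a face is the least common multiple $\prod_{q=1}^i \prod_{j \in C_q \setminus C_{q-1}} x_j^{\nu_{j,C_q}}$ with $C_0 = \emptyset$. First I would write the fine Hilbert series as the rational function
\[
H\left(\frac{R}{I_U^{[\mathbf{n}]}}, \mathbf{x}\right) = \frac{\sum_{i=0}^n (-1)^i \sum_{C_1 \prec \ldots \prec C_i} \prod_{q=1}^i \left(\prod_{j \in C_q \setminus C_{q-1}} x_j^{\nu_{j,C_q}}\right)}{(1-x_1)\cdots(1-x_n)},
\]
exactly as in (\ref{EQ5}), the inner sum ranging over all $(i-1)$-dimensional faces of $\Delta(\Sigma_n)$, with the empty face contributing the constant term $1$. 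That membership $\mathbf{x}^{\mathbf{p}} \notin I_U^{[\mathbf{n}]}$ is governed by Lemma \ref{L3} guarantees this Hilbert series indeed enumerates $\overline{{\rm PF}}_n$.

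Next I would extract $\dim_k(R/I_U^{[\mathbf{n}]}) = H(R/I_U^{[\mathbf{n}]}, \mathbf{1})$, which is a $0/0$ indeterminate form since numerator and denominator both vanish at $\mathbf{x} = \mathbf{1}$. Following the computation in Proposition 4.5 of \cite{AC1} (and reused in Proposition \ref{P7}), I would evaluate this limit by a multivariate L'Hospital argument: the denominator vanishes to order $n$ at $\mathbf{1}$, so in the limit only those numerator terms that also vanish to order $n$ survive, each contributing the product $\prod_{q=1}^i \prod_{j \in C_q \setminus C_{q-1}} \nu_{j,C_q}$ obtained by differentiating away the $n$ vanishing factors.

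The decisive combinatorial point is to identify the surviving terms. A numerator term indexed by a chain $C_1 \subsetneq \ldots \subsetneq C_i$ vanishes to order $n$ at $\mathbf{1}$ exactly when every variable $x_1, \ldots, x_n$ occurs in its monomial label, i.e. when $C_1 \cup \ldots \cup C_i = [n]$; and since the union of a chain in $\Sigma_n$ is its largest member, this amounts to $C_i = [n]$. Writing these surviving chains as $\emptyset = C_0 \subsetneq C_1 \subsetneq \ldots \subsetneq C_i = [n]$ and combining the ambient sign $(-1)^i$ from the resolution with the sign produced by the L'Hospital step yields the factor $(-1)^{n-i}$, giving exactly the stated formula. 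The main obstacle is purely the sign bookkeeping together with checking that the non-surviving terms vanish strictly faster than order $n$; both are dispatched verbatim as in the cited proofs, so the argument is routine once the rational-function expression above is in hand.
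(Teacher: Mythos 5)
Your proposal is correct and follows essentially the same route as the paper: the paper's proof is precisely ``proceed as in Proposition \ref{P7}'', i.e.\ use the minimal cellular resolution $\mathbb{F}_{*}(\Delta(\Sigma_n))$ of $I_U^{[\mathbf{n}]}$ to write the fine Hilbert series as a rational function and then pass to $\mathbf{x}\to\mathbf{1}$ via L'Hospital (following Proposition 4.5 of \cite{AC1}), with the surviving terms being exactly the chains with $C_i=[n]$ and sign $(-1)^{n-i}$. Your phrase that surviving numerator terms ``vanish to order $n$'' is slightly loose --- the precise mechanism is that the limit equals $(-1)^n\partial^n/\partial x_1\cdots\partial x_n$ of the numerator at $\mathbf{1}$, so a chain contributes $\prod_j \nu_{j,C_q}$ and is killed unless its label involves every variable --- but this is the same computation you cite and does not affect correctness.
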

 
 Neither using Proposition \ref{P9}, nor by any combinatorial tricks, we could determine 
 $|\overline{{\rm PF}}_n| = \dim_k\left(\frac{R}{I_{U}^{[\mathbf{n}]}}\right)$.
Thus, we ask the following question.
 
 {\bf Question} : Is it possible to identify the sequence
 $\left\lbrace \dim_k\left(\frac{R}{I_{U}^{[\mathbf{n}]}}\right)
 \right\rbrace_{n = 1}^{\infty}$ with some well known combinatorially interesting integer sequence?
 
 Computations for smaller values of $n$
 suggest
 that this integer sequence could be
  (A003319) in OEIS \cite{S}.
 
 {\bf Acknowledgements} : The second author is thankful to  
 CSIR, Government of India for financial support.

\end{document}